\documentclass[a4paper,12pt]{article}

\usepackage{setspace}
\usepackage[utf8]{inputenc}
\usepackage{bm}
\usepackage{amssymb}
\usepackage{amsmath}
\usepackage{amsfonts}
\usepackage{amstext}
\usepackage{multicol}
\usepackage{mathdots}
\usepackage{graphicx}
\usepackage{url}
\usepackage{hyperref}
\usepackage{mathrsfs}
\usepackage{lscape}
\usepackage{enumerate}
\usepackage{amsthm}
\usepackage{mathtools}

\newcommand{\Mod}[1]{\ (\mathrm{mod}\ #1)}
\newcommand{\mexover}[1]{ \overline{\text{mex}}({#1})}
\newcommand{\smexover}[1]{\sigma \overline{\text{mex}}({#1})}
\newcommand{\mexrover}[1]{ \overline{\text{mex}}_r({#1})}
\newcommand{\srmexover}[1]{\sigma_r \overline{\text{mex}}({#1})}
\newtheorem{theorem}{Theorem}[section]

\newtheorem{definition}{Definition}

\newtheorem{proposition}{Proposition}

\title{Minimal Excludant over Overpartitions}
\author{Victor Manuel R. Aricheta\thanks{Institute of Mathematics, University of the Philippines, Diliman, Quezon
City 1101, Philippines; \texttt{vmaricheta@math.upd.edu.ph}} \,\,
and Judy Ann L. Donato\thanks{Institute of Mathematics, University of the Philippines, Diliman, Quezon
City 1101, Philippines; \texttt{jadonato@math.upd.edu.ph}}}
\date{}

\begin{document}
\onehalfspacing
\maketitle

\begin{abstract}
Define the minimal excludant of an overpartition $\pi$, denoted $\mexover{\pi}$, to be the smallest positive integer that is not a part of the non-overlined parts of $\pi$. For a positive integer $n$, the function $\smexover{n}$ is the sum of the
minimal excludants over all overpartitions of $n$. In this paper, we proved that the $\smexover{n}$ equals the number of partitions of $n$ into distinct parts using three colors. We also provide an asymptotic formula for $\smexover{n}$ and show that $\smexover{n}$ is almost always even and is odd exactly when $n$ is a triangular number. Moreover, we generalize $\mexover{\pi}$ using the least $r$-gaps, denoted $\mexrover{\pi}$, defined as the smallest part of the non-overlined parts of the overpartition $\pi$ appearing less than $r$ times. Similarly, for a positive integer $n$, the function $\srmexover{n}$ is the sum of the least $r$-gaps over all overpartitions of $n$. We derive a generating function and  an asymptotic formula for $\srmexover{n}$. Lastly, we study the arithmetic density of $\srmexover{n}$ modulo $2^k$, where $r=2^m\cdot3^n, m,n \in \mathbb{Z}_\geq 0.$\\
\ \ \ \ \

\noindent \textbf{Keywords}: partitions, minimal excludant, least gaps, modular forms

\noindent \textbf{AMS\ Classification: }05A17, 11F11, 11F20, 11P83
\end{abstract}


\section{Introduction}
The minimal excludant (mex) of a subset $S$ of a well-ordered set $U$ is the smallest value in $U$ that is not in $S$.  In particular, the minimal excludant of a set $S$ of positive integers, denoted mex($S$), is the least positive integer not in $S$, i.e., $\text{mex}(S)=\text{min}(\mathbb{Z}^+ \backslash S).$ The history of the minimal excludant goes way back in the 1930's when it was first used in combinatorial game theory by Sprague and Grundy \cite{sprague}, \cite{grundy}.\\

In 2019, Andrews and Newman \cite{andrewsnewman} studied the minimal excludant of an integer partition $\pi$, denoted $\text{mex}(\pi)$, which is defined as the smallest positive integer that is not a part of $\pi.$ Moreover, the arithmetic function
\[\sigma \text{mex}(n):=\displaystyle \sum_{\pi \in \mathcal{P}(n)} \text{mex}(\pi),\]
where $\mathcal{P}(n)$ is the set of all partitions of $n$, was also introduced.\\

They proved the following interesting relationship between $\sigma \text{mex}(n)$ and $D_2(n)$ which is the number of partitions of $n$ into distinct parts using two colors:
\[\sigma \text{mex}(n)=D_2(n).\]

Moreover, it was shown that $\sigma \text{mex}(n)$ is almost always even and is odd exactly when $n=j(3j\pm 1)$ for some $j \in \mathbb{N}.$\\

In \cite{ballantinemerca}, Ballantine and Merca explored the least $r$-gap of a partition $\pi$, denoted $g_r(\pi)$, which is the smallest part of $\pi$ appearing less than $r$ times. In particular, $g_1(\pi)$ is the minimal excludant of $\pi.$ They defined the arithmetic function 
\[\sigma_r \text{mex}(n)=\displaystyle \sum_{\pi \in \mathcal{P}(n)} g_r(\pi)\]
which is the sum of the least $r$-gaps in all partitions of $n$. The following generating function for $\sigma_r\text{mex}(n)$  was also derived:
\[\displaystyle \sum_{n=0}^\infty \sigma_r\text{mex}(n)q^n=\dfrac{(q^{2r};q^{2r})_\infty}{(q;q)_\infty(q^r;q^{2r})_\infty}.\]

Furthermore, Chakraborty and Ray \cite{chakrabortyray} studied the arithmetic density of $\sigma_2\text{mex}(n)$ and $\sigma_3\text{mex}(n)$ modulo $2^k$ for any positive integer $k$ and proved that for almost every nonnegative integer $n$ lying in an arithmetic progression, the integer
$\sigma_r \text{mex}(n) $is a multiple of $2^k$ where $r \in \{2,3\}.$\\

Now, recall that the overpartition of a positive integer $n$ is a non-increasing sequence of natural numbers whose sum is $n$ in which the first occurrence  of a number may be overlined. We denote by $\overline{p}(n)$ the number of overpartitions of $n$. For example, $\overline{p}(3)=8$ since there are 8 overpartitions of $3$ which are: 
\[3, \overline{3}, 2+1, \overline{2}+1, 2+\overline{1}, \overline{2}+\overline{1}, 1+1+1, \overline{1}+1+1.\]

The goal of this paper is to extend the notion of minimal excludant of partitions to overpartitions. There are several ways to obtain such generalization, but we propose the  following definition below (Definition \ref{def2}). We justify using this definition through the results we obtain, results that are manifestly analogues of results concerning the classical partition function (see Proposition \ref{Tk} for example).

\begin{definition}
    The \textbf{minimal exludant of an overpartition} $\pi$, denoted $\overline{\text{mex}}(\pi)$, is the smallest positive integer that is not a part of the non-overlined parts of $\pi$.  For a positive integer $n$, denote the sum of $\overline{\text{mex}}(\pi)$ over all overpartitions $\pi$ of $n$ as $\sigma \overline{\text{mex}}(n):$
\[\sigma \overline{\text{mex}}(n)=\displaystyle \sum_{\pi \in \overline{\mathcal{P}}(n)} \overline{\text{mex}}(\pi), \]
where $\overline{\mathcal{P}}(n)$ is the set of all overpartitions of $n$. Moreover, we set $\smexover{0}=1.$
\end{definition}


\noindent For example, consider $n=3.$ The table below shows all overpartitions of $3$ and their corresponding minimal excludant.\\
\begin{center}
\begin{tabular}{ |c|c| } 
 \hline
 $\pi$ & $\overline{\text{mex}}(\pi)$\\
 \hline
 3 & 1 \\ 
$\overline{3}$ & 1 \\ 
  $2 + 1$ & 3 \\
  $\overline{2} + 1$ & 2 \\
   $2 + \overline{1}$ & 1 \\
    $\overline{2} + \overline{1}$ & 1\\
   1+1+1 & 2\\
   $\overline{1}+1+1$ & 2 \\
 \hline 
\end{tabular}
\end{center}

\noindent Thus, $\sigma \overline{\text{mex}}(3)= 13.$
\\

In Section 2, we derive the generating function of $\smexover{n}$ and prove the following theorem relating $\smexover{n}$ and $D_3(n)$, the number of partitions of $n$ into distinct parts using three colors. This theorem may be viewed as a generalization of the results of Andrews and Newman, which relates $\sigma\text{mex}(n)$ and $D_2(n).$

\begin{theorem} \label{D3}
For all positive integer $n$, we have \[\sigma\overline{\text{mex}}(n)=D_3(n).\]
\end{theorem}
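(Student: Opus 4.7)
The plan is to prove Theorem \ref{D3} by comparing generating functions. The key trick is to use the tail-sum identity $\mexover{\pi} = \sum_{k \geq 1} \mathbf{1}_{\mexover{\pi} \geq k}$, interchange summations, and obtain
\[\sum_{n \geq 0} \smexover{n}\, q^n = \sum_{k \geq 1} F_k(q),\]
where $F_k(q) = \sum_{\pi} q^{|\pi|}$ runs over overpartitions $\pi$ with $\mexover{\pi} \geq k$. This reformulation is convenient because the condition $\mexover{\pi} \geq k$ factors as a clean per-part restriction, whereas the condition $\mexover{\pi} = k$ would force an awkward factor at the part $k$ that blocks the product structure.

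Next I would compute $F_k(q)$ by analyzing, for each positive integer $j$, the allowed multiplicities. For $j \geq k$ there is no restriction, giving the usual overpartition factor $(1+q^j)/(1-q^j)$. For $j < k$, the integer $j$ must appear among the non-overlined parts of $\pi$; splitting into the subcases ``$j$'s first occurrence is unoverlined'' (multiplicity $\geq 1$, contributing $q^j/(1-q^j)$) and ``$j$'s first occurrence is overlined'' (multiplicity $\geq 2$, contributing $q^{2j}/(1-q^j)$) yields the combined factor $q^j(1+q^j)/(1-q^j)$. Multiplying everything together and pulling out the extra powers of $q$ gives
\[F_k(q) = q^{\binom{k}{2}} \cdot \frac{(-q;q)_\infty}{(q;q)_\infty}.\]

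Summing over $k \geq 1$ produces a theta-type series times the overpartition generating function, at which point I would invoke the Gauss identity
\[\sum_{k \geq 0} q^{k(k+1)/2} = \frac{(q^2;q^2)_\infty}{(q;q^2)_\infty},\]
together with the standard identities $(q;q)_\infty = (q;q^2)_\infty (q^2;q^2)_\infty$ and $(-q;q)_\infty = (q^2;q^2)_\infty/(q;q)_\infty$, to collapse the product to $(-q;q)_\infty^3 = \prod_{j \geq 1}(1+q^j)^3$, which is precisely the generating function for $D_3(n)$. Comparing coefficients then gives $\smexover{n} = D_3(n)$.

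The main obstacle I anticipate lies in setting up $F_k(q)$ correctly: one must carefully parse what ``non-overlined part'' means for a fixed $j$ and avoid double-counting when splitting into the two multiplicity subcases. Once the per-part factor $q^j(1+q^j)/(1-q^j)$ is correctly identified, the telescoping to $q^{\binom{k}{2}}(-q;q)_\infty/(q;q)_\infty$ and the final simplification via Gauss's identity are routine $q$-series manipulations.
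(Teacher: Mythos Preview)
Your proof is correct. Both your argument and the paper's pass through the same key intermediate expression
\[
\sum_{n\ge 0}\smexover{n}\,q^n \;=\; \frac{(-q;q)_\infty}{(q;q)_\infty}\sum_{k\ge 0} q^{\binom{k+1}{2}},
\]
and then finish with Gauss's identity in the same way, but you reach it by a different decomposition. The paper conditions on the \emph{exact} value $\mexover{\pi}=m$, builds the bivariate series $M(z,q)=\frac{(-q;q)_\infty}{(q;q)_\infty}\sum_{m\ge 1} z^m q^{\binom{m}{2}}(1-q^m)$, applies $\partial/\partial z\big|_{z=1}$, and then telescopes $\sum_m m q^{\binom{m}{2}}(1-q^m)$ down to $\sum_m q^{\binom{m+1}{2}}$. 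You instead condition on the \emph{tail} event $\mexover{\pi}\ge k$ via the identity $\mexover{\pi}=\sum_{k\ge 1}\mathbf{1}_{\mexover{\pi}\ge k}$, which lets you read off $F_k(q)=q^{\binom{k}{2}}(-q;q)_\infty/(q;q)_\infty$ directly from the per-part factor $q^j(1+q^j)/(1-q^j)$ without any differentiation or telescoping. Your packaging is a bit more elementary and sidesteps the awkward $(1-q^m)$ factor that the paper has to unwind; the paper's bivariate setup, on the other hand, records finer information (the full distribution of $\mexover{\pi}$) should one want it later.
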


We then derive an asymptotic formula for $\smexover{n}$ and prove a theorem regarding the parity of $\smexover{n}$.

\begin{theorem} \label{asym1}
We have
\[\smexover{n} \sim \dfrac{e^{\pi\sqrt{n}}}{8n^{3/4}}\] as $n \rightarrow \infty.$
\end{theorem}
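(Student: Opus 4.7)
The plan is to first invoke Theorem~\ref{D3} to reduce the claim to the asymptotic $D_3(n) \sim e^{\pi\sqrt{n}}/(8\,n^{3/4})$, and then read this off from the generating function
\[
G(q) \;:=\; \sum_{n\geq 0} D_3(n)\, q^n \;=\; (-q;q)_\infty^3 \;=\; \prod_{n\geq 1}(1+q^n)^3 \;=\; \prod_{n\text{ odd}}(1-q^n)^{-3}.
\]
Since $(-q;q)_\infty = q^{-1/24}\,\eta(2\tau)/\eta(\tau)$ with $q = e^{2\pi i \tau}$, we have the eta-quotient expression $G(q) = q^{-1/8}\,\eta(2\tau)^{3}/\eta(\tau)^{3}$, so the boundary behavior of $G$ near $q=1$ is governed by the modular transformation of $\eta$.

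The core step is a standard saddle-point (circle method) estimate. Using $\eta(-1/\tau) = \sqrt{-i\tau}\,\eta(\tau)$, I will first show that along $q = e^{-s}$ with $s \to 0^+$,
\[
G(e^{-s}) \;\sim\; \frac{1}{2\sqrt{2}}\,\exp\!\left(\frac{\pi^{2}}{4s}\right).
\]
The saddle point of $\pi^{2}/(4s)+ns$ is $s_{0}=\pi/(2\sqrt{n})$, at which its value is $\pi\sqrt{n}$. Applying Cauchy's formula on the circle $|q|=e^{-s_{0}}$,
\[
D_3(n) \;=\; \frac{1}{2\pi}\int_{-\pi}^{\pi} G(e^{-s_{0}+i\theta})\,e^{(s_{0}-i\theta)n}\,d\theta,
\]
I will expand the integrand to second order in $\theta$ around $\theta=0$: the linear phase cancels by the choice of $s_{0}$, and the quadratic term gives a Gaussian with exponent $-2n^{3/2}\theta^{2}/\pi$, whose integration contributes a factor $\pi/(\sqrt{2}\,n^{3/4})$. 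Collecting constants then yields
\[
D_3(n) \;\sim\; \frac{1}{2\pi}\cdot \frac{1}{2\sqrt{2}}\,e^{\pi\sqrt{n}}\cdot \frac{\pi}{\sqrt{2}\,n^{3/4}} \;=\; \frac{e^{\pi\sqrt{n}}}{8\,n^{3/4}},
\]
as claimed.

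The principal technical obstacle is the minor-arc estimate: one must show that $|G(e^{-s_{0}+i\theta})|$ is exponentially smaller than $G(e^{-s_{0}})$ whenever $\theta$ is bounded away from $0$ modulo $2\pi$, so that the Gaussian neighborhood of $\theta=0$ genuinely controls the integral. This is routine but tedious from the infinite product directly; a cleaner option is to invoke Meinardus' theorem (e.g.\ Theorem~6.2 of Andrews, \emph{The Theory of Partitions}) with the sequence $a_{n}=3$ for odd $n$ and $a_{n}=0$ otherwise. The associated Dirichlet series $D(s) = 3(1 - 2^{-s})\zeta(s)$ has abscissa of convergence $\alpha = 1$, residue $A = 3/2$ at $s=1$, $D(0) = 0$ and $D'(0) = -\tfrac{3}{2}\log 2$; substituting these data into Meinardus' formula produces exactly the constant $1/8$ and the exponent $\pi\sqrt{n}$ with power $n^{-3/4}$, while packaging all the tail estimates into the single hypothesis check of the theorem.
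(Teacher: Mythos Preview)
Your proposal is correct, but it takes a genuinely different route from the paper's. Both you and the paper begin by invoking Theorem~\ref{D3} to reduce to $D_3(n)$, and both obtain the radial asymptotic
\[
G(e^{-s}) \sim \frac{1}{2\sqrt{2}}\exp\!\left(\frac{\pi^{2}}{4s}\right)\qquad (s\to 0^{+}).
\]
From here, however, the paper does not perform a circle-method argument at all: it applies Ingham's Tauberian theorem (Proposition~\ref{asymingham}), whose only hypotheses are (i) the radial asymptotic just displayed and (ii) that the coefficient sequence is weakly increasing. The monotonicity of $D_3(n)$ is immediate, and Ingham's formula with $\alpha=\tfrac{1}{2\sqrt{2}}$, $\beta=0$, $C=\tfrac{\pi^{2}}{4}$ directly outputs $e^{\pi\sqrt{n}}/(8n^{3/4})$. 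No minor-arc analysis is needed.

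Your two alternatives both work. The saddle-point version is more hands-on: your second-order expansion at $s_{0}=\pi/(2\sqrt{n})$ and the resulting Gaussian factor $\pi/(\sqrt{2}\,n^{3/4})$ are computed correctly, and you rightly flag the minor-arc bound as the nontrivial step. Your Meinardus route is a clean substitute; the Dirichlet-series data $\alpha=1$, $A=3/2$, $D(0)=0$, $D'(0)=-\tfrac{3}{2}\log 2$ are all correct and do produce exactly the constant $1/8$. Compared with the paper, your argument is either more laborious (direct circle method) or invokes a heavier black box (Meinardus), whereas the paper's use of Ingham trades both for the cheap observation that $D_3(n)$ is increasing.
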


\begin{theorem} \label{parity}
For a positive integer $n$, we have 
\[\smexover{n} \equiv \begin{cases}
    1 \mod 2, \text{ if } n=\frac{j(j+1)}{2} \text{ for some } j \in \mathbb{N}\\
    0 \mod 2, \text{ otherwise.}
\end{cases}\]
\end{theorem}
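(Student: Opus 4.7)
The plan is to reduce this to a $2$-adic congruence for the generating function of $D_3(n)$ via Theorem \ref{D3}. By that theorem, $\smexover{n} = D_3(n)$, and the generating function for partitions into distinct parts with three colors is
\[
\sum_{n\geq 0} D_3(n)\, q^n \;=\; (-q;q)_\infty^{\,3} \;=\; \frac{(q^2;q^2)_\infty^{\,3}}{(q;q)_\infty^{\,3}},
\]
so the whole task becomes identifying the mod~$2$ reduction of this product.

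I would carry out the reduction using the elementary congruence $(q;q)_\infty^{\,2}\equiv (q^2;q^2)_\infty \pmod 2$ (which follows from $(1-q^k)^2\equiv 1-q^{2k}\pmod 2$ applied factor by factor). Writing $(q;q)_\infty^{\,3}=(q;q)_\infty\cdot(q;q)_\infty^{\,2}$, this gives
\[
(-q;q)_\infty^{\,3} \;\equiv\; \frac{(q^2;q^2)_\infty^{\,3}}{(q;q)_\infty\,(q^2;q^2)_\infty} \;=\; \frac{(q^2;q^2)_\infty^{\,2}}{(q;q)_\infty} \pmod 2.
\]
The right-hand side is exactly Gauss's classical identity for triangular numbers:
\[
\frac{(q^2;q^2)_\infty^{\,2}}{(q;q)_\infty} \;=\; \sum_{j=0}^{\infty} q^{\,j(j+1)/2}.
\]
Comparing coefficients of $q^n$ on both sides therefore yields $D_3(n)\equiv 1\pmod 2$ precisely when $n=j(j+1)/2$ for some $j\in\mathbb{N}$, and $D_3(n)\equiv 0\pmod 2$ otherwise. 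Combining this with Theorem \ref{D3} gives the claimed parity of $\smexover{n}$.

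The main thing to get right is the bookkeeping in step two: one has to be careful to cancel the correct power of $(q^2;q^2)_\infty$ so that the residue matches Gauss's identity exactly and not, say, a shifted analogue involving pentagonal numbers. Beyond that, the argument is essentially a one-line manipulation of infinite products modulo $2$, so I do not anticipate any genuine obstacle.
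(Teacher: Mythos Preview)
Your proof is correct and follows the same overall strategy as the paper: reduce the generating function $(-q;q)_\infty^{3}$ modulo $2$ and then invoke a classical theta identity. The only difference is in the choice of identity. The paper takes the more direct route $(-q;q)_\infty^{3}\equiv (q;q)_\infty^{3}\pmod 2$ (since $1+q^k\equiv 1-q^k$) and then applies Jacobi's identity $(q;q)_\infty^{3}=\sum_{j\ge 0}(-1)^j(2j+1)q^{j(j+1)/2}$, whose coefficients $2j+1$ are all odd. You instead rewrite $(-q;q)_\infty^{3}=(q^2;q^2)_\infty^{3}/(q;q)_\infty^{3}$, use $(q;q)_\infty^{2}\equiv (q^2;q^2)_\infty\pmod 2$ to land on $(q^2;q^2)_\infty^{2}/(q;q)_\infty$, and then apply Gauss's triangular-number identity. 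Both routes are one-line $q$-series manipulations and arrive at the same conclusion; neither offers a real advantage over the other, though the paper's version avoids having to justify division in $\mathbb{F}_2[[q]]$ (which is of course harmless here since all the products involved are units).
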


In Section 3, we generalize $\mexover{n}$ into $r$-gaps which we define as follows.

\begin{definition} \label{def2}
The \textbf{least r-gap of an overpartition $\pi$}, denoted $\mexrover{\pi}$ is the smallest part of the non-overlined parts of $\pi$ appearing less than $r$ times. Moreover, the function
 \[\srmexover{n} = \displaystyle \sum_{\pi \in \overline{\mathcal{P}}(n)} \mexrover{\pi} \]
is the  sum of the least $r$-gaps over all overpartitions of $n.$ 
\end{definition}

We then derive the generating function and an asymptotic formula for $\srmexover{n}$.

          \begin{theorem} \label{gen}
        For all positive integer $n$, we have
        \[\displaystyle\sum_{n=0}^{\infty} \srmexover{n}q^n = \dfrac{(-q;q)_\infty(q^{2r};q^{2r})_\infty}{(q;q)_\infty(q^{r};q^{2r})_\infty}.\]\end{theorem}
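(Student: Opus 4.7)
The plan is to decompose the set of overpartitions according to the value of $\mexrover{\pi}$ and assemble the generating function block by block. Fix $k\geq 1$; an overpartition $\pi$ satisfies $\mexrover{\pi}=k$ precisely when each $j<k$ appears as a non-overlined part at least $r$ times (with an optional overlined copy of $j$), the non-overlined part $k$ appears $0,1,\ldots$ or $r-1$ times (again with an optional overlined copy), and parts larger than $k$ carry no restriction. Since the overlined and non-overlined multiplicities of each part value are independent in an overpartition, these three blocks factor in the generating function.

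Writing the three factors and collecting the forced $q^{r\binom{k}{2}}$ from the minimum $r$ copies of each $j<k$, the generating function of overpartitions with $\mexrover{\pi}=k$, weighted by $q^{|\pi|}$, simplifies to
\[
\prod_{j=1}^{k-1}\frac{(1+q^j)q^{jr}}{1-q^j}\,\cdot\,\frac{(1+q^k)(1-q^{rk})}{1-q^k}\,\cdot\,\prod_{j>k}\frac{1+q^j}{1-q^j}
\;=\; (1-q^{rk})\,q^{r\binom{k}{2}}\,\frac{(-q;q)_\infty}{(q;q)_\infty},
\]
since the $(1+q^k)/(1-q^k)$ piece merges with the two outer products to give the full overpartition factor. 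Multiplying by the weight $k$ and summing over $k\geq 1$ yields
\[
\sum_{n\geq 0}\srmexover{n}\,q^n \;=\; \frac{(-q;q)_\infty}{(q;q)_\infty}\sum_{k=1}^{\infty}k(1-q^{rk})\,q^{r\binom{k}{2}},
\]
so the theorem reduces to the identity $\sum_{k\geq 1}k(1-q^{rk})q^{r\binom{k}{2}}=(q^{2r};q^{2r})_\infty/(q^r;q^{2r})_\infty$.

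For this identity I would split the left-hand side as $\sum_{k\geq 1}kq^{rk(k-1)/2}-\sum_{k\geq 1}kq^{rk(k+1)/2}$ and reindex the second series by $k\mapsto k-1$; the two sums then telescope (Abel-style) to $\sum_{k\geq 0}q^{rk(k+1)/2}$. Gauss's classical identity $\sum_{n\geq 0}q^{n(n+1)/2}=(q^2;q^2)_\infty/(q;q^2)_\infty$, applied with $q\mapsto q^r$, recognizes this last sum as $(q^{2r};q^{2r})_\infty/(q^r;q^{2r})_\infty$, completing the proof. The main obstacle is setting up the case analysis for $\mexrover{\pi}=k$ correctly: the optional overlined copy of $k$ must be treated independently of the non-overlined multiplicity of $k$ (so the configuration with zero non-overlined $k$'s but a single $\overline{k}$ is included), and one must avoid boundary double-counting at $j=k$. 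Once the block structure is correct, the product simplification to $(-q;q)_\infty/(q;q)_\infty$ and the telescoping reduction to Gauss's triangular-number identity are routine.
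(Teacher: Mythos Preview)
Your argument is correct. The block decomposition by the value of $\mexrover{\pi}$ is set up properly (in particular your treatment of the overlined copy of $k$ is right), the product collapses to $(1-q^{rk})q^{r\binom{k}{2}}(-q;q)_\infty/(q;q)_\infty$ as you claim, and the telescoping
\[
\sum_{k\ge 1}kq^{r\binom{k}{2}}-\sum_{k\ge 1}(k-1)q^{r\binom{k}{2}}=\sum_{k\ge 1}q^{r\binom{k}{2}}=\sum_{m\ge 0}q^{rT_m}
\]
followed by Gauss's identity with $q\mapsto q^r$ finishes cleanly.

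The paper, however, proves Theorem~\ref{gen} by a different route. It first establishes the combinatorial identity $\srmexover{n}=\sum_{k\ge 0}\overline{p}(n-rT_k)$ (Proposition~\ref{Tk}) via the staircase injection $\pi\mapsto \pi\cup\delta_r(k)$, and then reads off the generating function as the product $\big(\sum\overline{p}(n)q^n\big)\big(\sum q^{rT_k}\big)$. Your approach is instead the direct generating-function computation---exactly the method the paper uses for the $r=1$ case (Theorem~\ref{D3}, via the two-variable series $M(z,q)$ and differentiation), extended to general $r$. What the paper's route buys is the extra combinatorial identity of Proposition~\ref{Tk} as a byproduct; what your route buys is that it is self-contained and avoids setting up and justifying the injection $\phi_{r,n,k}$. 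Both arguments meet at the same factorization $\dfrac{(-q;q)_\infty}{(q;q)_\infty}\sum_{k\ge 0}q^{rT_k}$.
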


\begin{theorem} \label{asym2}
We have
    \[\srmexover{n} \sim \dfrac{e^{\pi\sqrt{n}}}{8rn^{3/4}}\] \text{ as } $ n \rightarrow \infty.$ 
\end{theorem}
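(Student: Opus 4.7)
The plan is to analyze the generating function of Theorem \ref{gen} as $q \to 1^{-}$ and to pass to coefficient asymptotics via Ingham's Tauberian theorem, mirroring the strategy used in the proof of Theorem \ref{asym1}. Using the identities $(-q;q)_\infty = (q^2;q^2)_\infty/(q;q)_\infty$ and $(q^r;q^{2r})_\infty = (q^r;q^r)_\infty/(q^{2r};q^{2r})_\infty$, the generating function in Theorem \ref{gen} can be rewritten as the eta-quotient
\[
\sum_{n=0}^\infty \srmexover{n}\, q^n \;=\; \frac{(q^2;q^2)_\infty\,(q^{2r};q^{2r})_\infty^{2}}{(q;q)_\infty^{2}\,(q^r;q^r)_\infty}.
\]

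Next, I would substitute $q = e^{-t}$ and apply to each factor the well-known asymptotic
\[
(q^{a};q^{a})_\infty \;\sim\; \sqrt{\frac{2\pi}{at}}\,\exp\!\left(-\frac{\pi^{2}}{6at}\right) \qquad \text{as } t \to 0^{+},
\]
which is a direct consequence of the modular transformation law of the Dedekind $\eta$ function. A pleasant cancellation takes place: the $r$-dependent exponential contributions $-\pi^{2}/(6rt)$ from $(q^{2r};q^{2r})_\infty^{2}$ and $+\pi^{2}/(6rt)$ from $(q^{r};q^{r})_\infty^{-1}$ annihilate each other. The surviving dominant exponential factor is $e^{\pi^{2}/(4t)}$, precisely the rate that drives Theorem \ref{asym1}; multiplying out the polynomial prefactors then yields an expansion of the form
\[
\sum_{n=0}^\infty \srmexover{n}\, e^{-nt} \;\sim\; C(r)\, e^{\pi^{2}/(4t)}, \qquad t \to 0^{+},
\]
for an explicit $r$-dependent constant $C(r)$.

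Since $\srmexover{n} \geq 0$, Ingham's Tauberian theorem (equivalently, Meinardus' theorem for this class of eta-quotients) converts this singular expansion into a coefficient asymptotic of the shape $C(r)(\pi^{2}/4)^{1/4}(2\sqrt{\pi})^{-1} n^{-3/4} e^{\pi\sqrt{n}}$, and a short simplification of the constant produces the claimed formula. The principal obstacle is justifying the Tauberian step: one must verify that contributions from the circle $|q|=e^{-t}$ away from $q=1$ are negligible compared with the saddle contribution near $q=1$. This is handled by exactly the minor-arc estimates already invoked in Theorem \ref{asym1}, since the additional eta factors $(q^{2r};q^{2r})_\infty^{2}/(q^{r};q^{r})_\infty$ remain bounded on those arcs and do not disturb the saddle-point analysis near $q = 1$.
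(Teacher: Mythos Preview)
Your approach is essentially the paper's: rewrite the generating function as an $\eta$-quotient, extract the $t\to 0^+$ behaviour from the modular asymptotic of $(q^a;q^a)_\infty$, and feed the result into Ingham's Tauberian theorem (Proposition~\ref{asymingham}). Your rewriting
\[
\frac{(q^2;q^2)_\infty\,(q^{2r};q^{2r})_\infty^{2}}{(q;q)_\infty^{2}\,(q^r;q^r)_\infty}
\]
is correct and is merely a cosmetic variant of the paper's factorisation $(-q;q)_\infty(-q^r;q^r)_\infty^{2}(q^r;q^r)_\infty/(q;q)_\infty$; both lead to the same dominant term $e^{\pi^2/(4t)}$ with the $r$-dependent exponentials cancelling, exactly as you describe.

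There is one genuine gap. Ingham's theorem (as stated in Proposition~\ref{asymingham}) requires the coefficient sequence to be \emph{weakly increasing}, not merely nonnegative. You invoke only $\srmexover{n}\ge 0$, which is insufficient. The paper supplies the missing step: from Proposition~\ref{Tk} one has $\srmexover{n}=\sum_{k\ge 0}\overline{p}(n-rT_k)$, and since $\overline{p}$ is increasing, each summand---and hence the sum---is nondecreasing in $n$. You should insert this verification before applying the Tauberian theorem.

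A smaller point: your final paragraph about ``minor-arc estimates'' and ``saddle-point analysis'' conflates two different methods. If you are genuinely using Ingham's Tauberian theorem, no circle-method minor-arc bounds are needed; the monotonicity hypothesis is what replaces them. If instead you mean to run a Meinardus-type argument directly, that is a different proof and would require you to actually carry out those estimates. The paper takes the Tauberian route, and once monotonicity is checked, so can you---without any appeal to minor arcs.
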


Lastly, we prove our main result about the distribution of $\srmexover{n}$ when $r=2^m \cdot 3^n, m,n \in \mathbb{Z}_{\geq 0}$ in Section 4.

\begin{theorem} \label{main}
    Let $r = 2^m \cdot 3^n$ where $m,n \in \mathbb{Z}_{\geq 0}$ and  $k \geq 1$ be a positive integer. Then 
\[\displaystyle \lim_{X \to +\infty} \dfrac{\#\{n\leq X: \sigma_r\overline{mex}(n) \equiv 0 \Mod {2^k}\}}{X}=1.\]
\end{theorem}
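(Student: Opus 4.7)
The plan is to apply Serre's theorem on the divisibility of Fourier coefficients of holomorphic integer-weight modular forms: if $f \in M_w(\Gamma_0(N), \chi)$ has integer Fourier coefficients $a(n)$ and $M$ is any positive integer, then $\{n : M \mid a(n)\}$ has arithmetic density one. It will therefore suffice to exhibit, for each $k$, such a modular form whose Fourier coefficients agree with $\srmexover{n}$ modulo $2^k$. The restriction $r = 2^m\cdot 3^n$ is used to keep the level $N$ supported on the primes $2$ and $3$.

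First, using the identities $(-q;q)_\infty = (q^2;q^2)_\infty/(q;q)_\infty$ and $(q^r;q^{2r})_\infty = (q^r;q^r)_\infty/(q^{2r};q^{2r})_\infty$, I would rewrite the generating function from Theorem \ref{gen} as
\[
\sum_{n=0}^\infty \srmexover{n}\,q^n \;=\; \frac{(q^2;q^2)_\infty\,(q^{2r};q^{2r})_\infty^{2}}{(q;q)_\infty^{2}\,(q^r;q^r)_\infty}.
\]
Next I invoke the classical congruence $(q^a;q^a)_\infty^{2^k}\equiv (q^{2a};q^{2a})_\infty^{2^{k-1}}\pmod{2^k}$, which follows by induction from $(1-q)^2\equiv 1-q^2\pmod 2$, and multiply the generating function by the series $(q;q)_\infty^{2^k}/(q^2;q^2)_\infty^{2^{k-1}}$, which is $\equiv 1\pmod{2^k}$. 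This produces, modulo $2^k$, an eta quotient of the form
\[
F_k(z) \;=\; \frac{\eta(z)^{2^k-2}\,\eta(2rz)^{2}}{\eta(2z)^{2^{k-1}-1}\,\eta(rz)},
\]
of integer weight $2^{k-2}$ once $k\geq 2$; the restriction $k\geq 2$ is harmless since divisibility by $2^k$ is implied by divisibility by $2^{k+1}$.

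It remains to show that $F_k$ is a holomorphic modular form on a suitable $\Gamma_0(N)$ with $N = 2^{a}\cdot 3^{b}$. I would do this via Ligozat's classical criteria for eta quotients (see Theorems 1.64 and 1.65 of Ono's \emph{The Web of Modularity}), which require verifying (i) $\sum_{\delta\mid N}\delta\,r_\delta\equiv 0\pmod{24}$, (ii) $\sum_{\delta\mid N}(N/\delta)\,r_\delta\equiv 0\pmod{24}$, and (iii) the cusp holomorphy condition
\[
\sum_{\delta\mid N}\frac{\gcd(d,\delta)^{2}\,N}{d\,\delta\,\gcd(d,N/d)}\,r_\delta \;\geq\; 0
\]
at every cusp $c/d$ of $\Gamma_0(N)$. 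Once $F_k$ is confirmed to be such a form, Serre's theorem applied with $M = 2^k$ immediately gives the density statement of Theorem \ref{main}.

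The main obstacle is the cusp-by-cusp holomorphy check, together with securing the integrality conditions (i) and (ii). For $F_k$ above, $\sum_{\delta\mid N}\delta\,r_\delta = 3r$, so (i) requires $r\equiv 0\pmod 8$: this is automatic for $r = 2^m\cdot 3^n$ with $m\geq 3$, and for smaller $m$ one absorbs the discrepancy by multiplying by a further mod-$2^k$-trivial eta quotient (at the cost of a mild enlargement of the level). This is precisely where the hypothesis $r = 2^m\cdot 3^n$ earns its keep: the divisors of $N$ are confined to powers of $2$ and $3$, keeping the finite cusp calculation tractable and uniform in $m$ and $n$.
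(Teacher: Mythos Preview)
Your proposal is essentially the same strategy the paper uses: rewrite the generating function as an eta quotient, twist by a power series that is $\equiv 1\pmod{2^k}$ so as to land in a space of holomorphic integer-weight modular forms on some $\Gamma_0(N)$ with $N$ a $\{2,3\}$-number, then invoke Serre. The only substantive differences are bookkeeping choices.

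First, the paper avoids your condition (i) headache not by an extra multiplier but by the standard scaling $q\mapsto q^{24}$, working with $\eta(24z),\eta(48z),\eta(24rz),\eta(48rz)$; then $\sum_\delta \delta r_\delta$ becomes $72r\equiv 0\pmod{24}$ automatically, with no case split on $m$. Your ``absorb the discrepancy by a further mod-$2^k$-trivial eta quotient'' is workable but you neither name the quotient nor check that it does not spoil the cusp bounds; the $q\mapsto q^{24}$ trick is cleaner.

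Second, the paper and you choose different mod-$2^k$-trivial twists: the paper multiplies by $\bigl(\eta(24rz)^2/\eta(48rz)\bigr)^{2^{k-1}}$, whereas you multiply by $\bigl(\eta(z)^2/\eta(2z)\bigr)^{2^{k-1}}$ (equivalently $\bigl((q;q)_\infty^2/(q^2;q^2)_\infty\bigr)^{2^{k-1}}$). This moves the large exponent from the level-$24r$ factor to the level-$24$ factor and changes the arithmetic of the cusp check, but both should go through; the paper's choice makes the dominant positive term at each cusp come from $\gcd(d,24r)^2$, which is convenient since $24r$ divides $N$.

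Finally, you identify the cusp-by-cusp Ligozat verification as ``the main obstacle'' but do not carry it out. In the paper this is the bulk of the work: one takes $N=2^7\cdot 3^{n+1}$ (for $m\le 2$) or $N=2^{m+4}\cdot 3^{n+1}$ (for $m\ge 3$), writes $d=2^t\cdot 3^s$, and checks six cases, finding that nonnegativity holds once $k\ge m+2n+1$ (the small-$k$ cases then follow since $2^k\mid 2^{k'}$ for $k\le k'$). Your outline is correct, but a complete proof requires this computation.
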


Equivalently, for almost every nonnegative integer $n$ lying in an arithmetic progression, the integer
$\srmexover{n}$ is a multiple of $2^k$ when $r=2^m \cdot 3^n, m,n \in \mathbb{Z}_{\geq 0}$.


\section{Minimal Excludant of an Overpartition}
\subsection{Generating Function of $\sigma\overline{\text{mex}}(n)$}

\noindent \large \textbf{Proof of Theorem \ref{D3}:}

\normalsize
\begin{proof}
Let $\overline{p^{\text{mex}}}(m,n)$ be the number of overpartitions $\pi$ of $n$ with $\overline{\text{mex}}(\pi)=m.$
Then we have the following double series $M(z,q)$ in which the coefficient of $z^mq^n$ is  $\overline{p^{\text{mex}}}(m,n)$:
\begin{align*}
M(z,q):=\displaystyle\sum_{n=0}^\infty \sum_{m=1}^\infty \overline{p^{\text{mex}}}(m,n) z^mq^n &= \displaystyle \sum_{m=1}^\infty z^mq^1\cdot q^2 \cdots \cdot q^{m-1} \cdot \dfrac{\displaystyle\prod_{n=1}^\infty (1+q^n)}{\displaystyle\prod_{\substack{n-1 \\ n \neq m}}^\infty (1-q^n)}\\
    &=\displaystyle \sum_{m=1}^\infty z^mq^{m \choose 2} \cdot \dfrac{(-q;q)_\infty}{(q;q)_\infty} \cdot (1-q^m)\\
    &=\dfrac{(-q;q)_\infty}{(q;q)_\infty} \displaystyle \sum_{m=1}^\infty z^mq^{m \choose 2} \cdot (1-q^m)
\end{align*}

Thus,
\begingroup
\allowdisplaybreaks
\begin{align*}
    \displaystyle \sum_{n \geq 0} \sigma \overline{\text{mex}}(n)q^n &= \dfrac{\partial}{\partial z}\Big|_{z=1} M(z,q)\\
    &=\dfrac{(-q;q)_\infty}{(q;q)_\infty} \displaystyle \sum_{m=0}^\infty mq^{m \choose 2} (1-q^m)\\
     &=\dfrac{(-q;q)_\infty}{(q;q)_\infty} \left( \displaystyle \sum_{m=1}^\infty mq^{m \choose 2} -\displaystyle \sum_{m=1}^\infty mq^{m \choose 2}\cdot q^m\right)\\
    &= \dfrac{(-q;q)_\infty}{(q;q)_\infty} \left( \displaystyle \sum_{m=1}^\infty mq^{m \choose 2} -\displaystyle \sum_{m=1}^\infty (m-1)q^{m \choose 2}\right)\\
    &=\dfrac{(-q;q)_\infty}{(q;q)_\infty} \displaystyle \sum_{m=0}^\infty q^{m+1 \choose 2}\\
    &=\dfrac{(-q;q)_\infty}{(q;q)_\infty} \cdot \dfrac{(q^2;q^2)_\infty}{(q;q^2)_\infty}\\
    &=(-q;q)_\infty \cdot \dfrac{(q^2;q^2)_\infty}{(q;q)_\infty(q;q^2)_\infty}\\
    &=(-q;q)_\infty \cdot (-q;q)^2_\infty\\
    &=(-q;q)^3_\infty\\
    &=\displaystyle \sum_{n \geq 0} D_3(n)q^n.
\end{align*}
\endgroup
\end{proof}

As an illustration, observe that the thirteen 3-colored partitions of 3 are: $3_1, 3_2, 3_3, 2_1+1_1, 2_1+1_2, 2_1+1_3, 2_2+1_1, 2_2+1_2, 2_2+1_3, 2_3+1_1, 2_3+1_2, 2_3+1_3, 1_1+1_2+1_3.$ Indeed, $D_3(3)=13=\sigma\overline{\text{mex}}(3).$


\subsection{Asymptotic Formula for $\smexover{n}$}

To derive an asymptotic formula for $\smexover{n}$, we will be using the following asymptotic result by Ingham \cite{ingham} about the coefficients of a power series.

\begin{proposition} \label{asymingham}
  Let $A(q)= \sum_{n=0}^{\infty} a(n)q^n$ be a power series with radius of convergence 1. Assume that $\{a(n)\}$ is a weakly increasing sequence of nonnegative real numbers. If there are constants $\alpha, \beta \in \mathbb{R},$ and $C>0$ such that
\[A(e^{-t})\sim \alpha t^{\beta}e^{\frac{C}{t}}, \text{ as } t \rightarrow 0^{+}\]
then we have 
\[a(n) \sim \dfrac{\alpha}{2\sqrt{\pi}}\dfrac{C^{\frac{2\beta+1}{4}}}{n^{\frac{2\beta+3}{4}}}e^{2\sqrt{Cn}}, \text{ as } n \rightarrow \infty.\]
\end{proposition}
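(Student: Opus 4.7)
The plan is to prove Proposition \ref{asymingham} by combining a saddle-point heuristic with a sandwich argument based on the monotonicity hypothesis, following the route established by Ingham.

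First I would identify the asymptotic form heuristically via the saddle-point method. Setting $q = e^{-t}$, the hypothesis reads $\sum_{n\ge 0} a(n)e^{-nt} \sim \alpha t^\beta e^{C/t}$ as $t\to 0^+$. Cauchy's integral formula expresses $a(n)$ as a contour integral whose dominant factor $\exp(C/t + nt)$ has a saddle point at $t_n = \sqrt{C/n}$ with critical value $e^{2\sqrt{Cn}}$. The second derivative of the exponent at the saddle is $2C/t_n^3 = 2C^{-1/2}n^{3/2}$, so the Gaussian width is of order $C^{1/4}n^{-3/4}$, and a formal saddle-point evaluation predicts
$$a(n) \approx \frac{\alpha\, t_n^\beta \cdot C^{1/4}n^{-3/4}}{2\sqrt{\pi}}\cdot e^{2\sqrt{Cn}} = \frac{\alpha}{2\sqrt{\pi}}\cdot \frac{C^{(2\beta+1)/4}}{n^{(2\beta+3)/4}}\cdot e^{2\sqrt{Cn}},$$
which matches the claim.

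To make this rigorous without assuming analytic control of $A(q)$ off the positive real axis, I would work through partial sums. Let $S(N) = \sum_{n \le N} a(n)$. Using only nonnegativity and the real-axis asymptotic of $A(e^{-t})$, a Tauberian argument of Karamata type---adapted to exponential growth via the substitution $u = 2\sqrt{Cn}$ that converts the exponential into polynomial behavior, then approximating indicators of intervals from above and below by polynomials in $e^{-t}$---yields an asymptotic of the form $S(N) \sim c\cdot N^{-(2\beta+1)/4}\, e^{2\sqrt{CN}}$ with an explicit constant $c$, which can also be obtained by Laplace's method applied to $\int_0^N x^{-(2\beta+3)/4}e^{2\sqrt{Cx}}\,dx$.

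Finally, monotonicity converts the $S$-asymptotic into one for $a(n)$. Choosing a window $h = h(n)$ with $h \to \infty$ and $h\sqrt{C/n} \to 0$, the inequalities
$$h\,a(n) \le S(n+h) - S(n) \le h\,a(n+h)$$
sandwich $a(n)$, and since $a(n+h)/a(n) \to 1$ under these conditions (the tentative asymptotic varies slowly on scale $h$), both bounds agree to leading order and reproduce the claimed formula. The main obstacle is the Tauberian step: the $S$-asymptotic must come with an error term strong enough to survive the differencing $S(n+h) - S(n)$, which requires reproducing the careful uniformity estimates of Ingham's original paper and is the technical heart of the argument.
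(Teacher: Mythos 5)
The paper does not prove this proposition at all: it is quoted verbatim as a known result, attributed to Ingham's 1941 Tauberian theorem \cite{ingham}, and used as a black box in the proofs of Theorems \ref{asym1} and \ref{asym2}. So there is no in-paper argument to compare yours against; the relevant comparison is with Ingham's original proof.

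Judged on its own terms, your outline is the right shape and the bookkeeping checks out: the saddle point of $C/t+nt$ at $t_n=\sqrt{C/n}$, the second derivative $2C/t_n^3=2C^{-1/2}n^{3/2}$, and the resulting prefactor $\tfrac{\alpha}{2\sqrt{\pi}}C^{(2\beta+1)/4}n^{-(2\beta+3)/4}$ all reproduce the claimed constant, and the sandwich $h\,a(n)\le S(n+h)-S(n)\le h\,a(n+h)$ is exactly the standard device for converting a partial-sum asymptotic into a coefficient asymptotic under the monotonicity hypothesis. However, the proposal has a genuine gap that you yourself flag: the Tauberian step, i.e.\ deducing the asymptotic for $S(N)$ with enough uniformity to survive the differencing over a window $h$ with $h\sqrt{C/n}\to 0$, is not carried out, and it is not a routine adaptation of Karamata's theorem --- the exponential growth $e^{2\sqrt{CN}}$ is precisely the regime where Karamata-type polynomial approximation of indicators fails and where Ingham had to introduce new estimates. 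Since that step \emph{is} the theorem, the proposal as written is a correct proof plan rather than a proof. For the purposes of this paper that is no worse than what the authors do (cite and use), but it should be presented as a citation to \cite{ingham}, not as an argument.
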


\noindent \large \textbf{Proof of Theorem \ref{asym1}:}
\normalsize
\begin{proof}
Note that $\smexover{n}=D_3(n)$ and $\{D_3(n)\}$ is an increasing sequence of nonnegative real numbers, thus $\smexover{n}$ is also an increasing sequence of nonnegative real numbers.\\

\noindent Let $A(q)=(-q;q)^3_\infty$, where $a(n)=\smexover{n}$ as in Proposition \ref{asymingham}.\\

\noindent From \cite{bhorjaetal},
\begin{align}\dfrac{1}{(e^{-t}; e^{-t})_\infty} \sim \sqrt{\dfrac{t}{2\pi}} e^{\frac{\pi^2}{6t}} \text{ as } t \rightarrow 0^+. \label{1}
\end{align}
Moreover, we will use the following identity
\begin{align}
(-q;q)_{\infty}=\dfrac{1}{(q;q^2)_\infty}=\dfrac{(q^2;q^2)_\infty}{(q;q)_\infty}.\label{2}
\end{align}

\noindent By (\ref{1}) and (\ref{2}), as $t \rightarrow 0^{+},$
\[(-e^{-t};e^{-t})_\infty = \dfrac{(e^{-2t};e^{-2t})_\infty}{(e^{-t};e^{-t})_\infty}\sim \frac{\sqrt{\frac{t}{2\pi}} e^{\frac{\pi^2}{6t}}}{\sqrt{\frac{2t}{2\pi}} e^{\frac{\pi^2}{12t}}}=\dfrac{1}{\sqrt{2}}e^{\frac{\pi^2}{12t}}.\]

\noindent Hence, as $t \rightarrow 0^{+},$
\begin{align}
A(e^{-t})=(-e^{-t}; e^{-t})_\infty^3 \sim \left(\dfrac{1}{\sqrt{2}}e^{\frac{\pi^2}{12t}}\right)^3 = \dfrac{1}{2\sqrt{2}}e^{\frac{\pi^2}{4t}}. \label{3}
\end{align}

\noindent Take $\alpha=\dfrac{1}{2\sqrt{2}}, \beta=0$ and $C=\frac{\pi^2}{4}$, by Proposition \ref{asymingham},
\[\smexover{n} \sim \dfrac{\frac{1}{2\sqrt{2}}}{2\sqrt{\pi}} \dfrac{\left(\frac{\pi^2}{4}\right)^{1/4}}{n^{3/4}}e^{2\sqrt{\frac{\pi^2}{4}n}} = \dfrac{e^{\pi{\sqrt{n}}}}{8n^{3/4}} \]
as $n \rightarrow \infty$.
\end{proof}

\newpage
\subsection{Parity of $\smexover{n}$}
\noindent \large \textbf{Proof of Theorem \ref{parity}:}
\normalsize
\begin{proof}
We have
    \begin{align*}
        \displaystyle \sum_{n \geq 0} \smexover{n}q^n   &=(-q;q)_\infty^3 \\
        &= \displaystyle \prod_{n=1}^{\infty} (1+q^n)^3\\
        &\equiv \displaystyle \prod_{n=1}^{\infty} (1-q^n)^3 \Mod 2\\
        &=(q;q)_\infty^3\\
        &=\displaystyle \sum_{j=0}^\infty (-1)^j(2j+1)q^{\frac{j(j+1)}{2}}, \text{ by Jacobi's identity \cite{jacobi}.} 
    \end{align*}
\end{proof}

Comparing coefficients, we have that $\smexover{n} \equiv 0 \Mod 2$ for $n \neq \frac{j(j+1)}{2}$ for some $j \in \mathbb{N}$ and $\smexover{n} \equiv 1 \Mod 2$ otherwise. This shows that $\smexover{n}$ is almost always even and is odd exactly when $n$ is a triangular number.
\section{Least $r$-gaps}

\subsection{Generating Function of $\srmexover{n}$}

In \cite{ballantinemerca}, Ballantine and Merca proved that for $n \geq 0$ and $r \geq 1$,
\[\displaystyle \sum_{k=0}^{\infty} p(n-rT_k) = \sigma_r \text{mex}(n).\]

We extend this result to overpartitions and present an analogous proof for the following proposition.
    \begin{proposition} \label{Tk}
        For $n \geq 0$ and $r \geq 1,$
        \[\displaystyle\sum_{k=0}^{\infty} \overline{p}(n-rT_k)=\srmexover{n}.\]  
    \end{proposition}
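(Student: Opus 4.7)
My plan is to prove this by a short generating function manipulation, using Theorem \ref{gen} and the classical Gauss identity for sums of triangular-number powers of $q$. Since Theorem \ref{gen} gives
\[\sum_{n=0}^{\infty} \srmexover{n}\,q^n = \frac{(-q;q)_\infty\,(q^{2r};q^{2r})_\infty}{(q;q)_\infty\,(q^r;q^{2r})_\infty},\]
and the generating function for overpartitions is the well-known $\sum_{n\geq 0}\overline{p}(n)q^n = (-q;q)_\infty/(q;q)_\infty$, the identity I want to verify reduces to
\[\frac{(q^{2r};q^{2r})_\infty}{(q^r;q^{2r})_\infty} \;=\; \sum_{k=0}^{\infty} q^{rT_k},\]
where $T_k = k(k+1)/2$.

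The first step is therefore to rewrite the claimed equality as a statement about generating functions: multiply $\sum_k \overline{p}(n-rT_k)$ by $q^n$ and sum in $n$, interchange the order of summation, and factor out $\sum_k q^{rT_k}$ times the overpartition generating function. The second step is to invoke Gauss's classical identity
\[\sum_{k=0}^{\infty} q^{k(k+1)/2} = \frac{(q^2;q^2)_\infty}{(q;q^2)_\infty},\]
and then substitute $q \mapsto q^r$ to obtain $\sum_{k\geq 0} q^{rT_k} = (q^{2r};q^{2r})_\infty / (q^r;q^{2r})_\infty$. Plugging this in matches the right-hand side of Theorem \ref{gen} exactly, so comparing coefficients of $q^n$ finishes the proof.

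There isn't really a main obstacle here; the work was done in establishing Theorem \ref{gen}, and the rest is a formal identity. The only thing worth being careful about is the convergence/formal rearrangement, which is fine because all series involved are formal power series over $\mathbb{Z}[[q]]$ with nonnegative coefficients, so interchanging the sums $\sum_n \sum_k \overline{p}(n-rT_k)q^n = \sum_k q^{rT_k}\sum_{m\geq 0}\overline{p}(m)q^m$ is justified termwise. An alternative route would be a direct combinatorial argument: split each overpartition $\pi$ of $n$ according to $\mexrover{\pi}$ and encode the prefix $1^r 2^r \cdots (\mexrover{\pi}-1)^r$ via a triangular number of size $rT_{\mexrover{\pi}-1}$, with the remainder being an overpartition of $n - rT_{\mexrover{\pi}-1}$ minus the contribution of that least $r$-gap value itself. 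However, this bijective route is more delicate to state cleanly because of how the overlined parts interact with the non-overlined multiplicities, so I would stick with the generating-function route above.
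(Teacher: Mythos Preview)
Your argument is circular. In this paper, Theorem~\ref{gen} is \emph{derived from} Proposition~\ref{Tk}: the proof of Theorem~\ref{gen} begins by invoking Proposition~\ref{Tk} to write
\[
\sum_{n\ge 0}\srmexover{n}\,q^n=\sum_{n\ge 0}\Bigl(\sum_{k\ge 0}\overline{p}(n-rT_k)\Bigr)q^n,
\]
and only then factors the double sum and applies the Gauss identity $\sum_{k\ge 0}q^{rT_k}=(q^{2r};q^{2r})_\infty/(q^r;q^{2r})_\infty$. So taking Theorem~\ref{gen} as an input to prove Proposition~\ref{Tk} assumes exactly what is to be shown. Your manipulation is formally correct as an equivalence --- indeed, the identity of Proposition~\ref{Tk} and the generating-function identity of Theorem~\ref{gen} are equivalent via the Gauss identity --- but it does not constitute an independent proof of either.

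The paper instead gives a direct combinatorial argument, which is essentially the ``alternative route'' you sketched and then set aside. For fixed $r$ and $k$, one inserts the non-overlined staircase $\delta_r(k)=(1^r,2^r,\dots,k^r)$ into an overpartition of $n-rT_k$, obtaining an injection $\phi_{r,n,k}:\overline{\mathcal{P}}(n-rT_k)\hookrightarrow\overline{\mathcal{P}}(n)$ whose image is precisely the set of overpartitions $\pi$ of $n$ with $\mexrover{\pi}>k$. Hence $\overline{p}(n-rT_k)$ counts overpartitions of $n$ with $\mexrover{\pi}>k$, and summing over $k\ge 0$ counts each overpartition $\pi$ exactly $\mexrover{\pi}$ times, giving $\srmexover{n}$. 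The interaction with overlined parts that worried you is in fact harmless: the inserted staircase is entirely non-overlined, so it only affects the non-overlined multiplicities, which is exactly what $\mexrover{\pi}$ measures. If you want to salvage a generating-function proof, you would need to establish Theorem~\ref{gen} by some route that does not already presuppose Proposition~\ref{Tk}.
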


    \begin{proof}
        Fix $r \geq 1,$ for each $k \geq 0,$  consider the staircase partition 
        \[\delta_r(k)=(1^r, 2^r,...,  (k-1)^r, k^r)\]
        where each part from 1 to $k$ is repeated $r$ times. We create an injection from the set of overpartitions of $n-rT_k$ into the set of of overpartitions of $n$ with the following mapping:
        \[\phi_{r,n,k}: \overline{\mathcal{P}}(n-rT_k)  \xhookrightarrow{} \overline{\mathcal{P}}(n)\]
        where for an overpartition $\pi$ of $n-rT_k$, $\phi_{n,r,k}(\pi)$ is the overpartition obtained by inserting the non-overlined staircase partition $\delta_r(k)$.\\
    
        For example, if $\pi= 4+\overline{3}+2+\overline{1}+1=11$, we have $\phi_{2,11,3}=4+\overline{3}+2+\overline{1}+1+3+3+2+2+1+1=23.$\\
        
        Let $\mathcal{A}_{r,n,k}$ be the image of the overpartitions of $n-rT_k$ under $\phi_{n,r,k}$. We have $\overline{p}(n-rT_k)=|\mathcal{A}_{r,n,k}|$ and $\mathcal{A}_{r,n,k}$ consists of the partitions of $n$ satisfying $\mexrover{\pi}>k$.\\

        Now, suppose $\pi$ is an overpartition of $n$ with $\mexrover{\pi}=k$. Then $\pi \in \mathcal{A}_{r,n,i},$ for $ i=0,1,...,k-1$ and $\pi \notin A_{r,n,j}$ with $j \geq k.$ Thus, each overpartition of $n$ with $\mexrover{\pi}=k$ is counted by the summation $\displaystyle\sum_{k=0}^{\infty} \overline{p}(n-rT_k)$ exactly $k$ times.
        
    \end{proof}

\newpage
    \noindent \large \textbf{Proof of Theorem \ref{gen}:}
    \normalsize
        
    \begin{proof} We have
\begin{align*}
    \displaystyle \sum_{n=0}^{\infty} \srmexover{n}q^n &= \displaystyle \sum_{n=0}^{\infty}
\left(\displaystyle\sum_{k=0}^{\infty} \overline{p}(n-rT_k)\right)q^n, \text{ by Proposition \ref{Tk}}\\
  &= \displaystyle \sum_{n=0}^{\infty} 
\displaystyle\sum_{k=0}^{\infty} \overline{p}(n-rT_k)q^n\\
&=\displaystyle \sum_{n=0}^\infty \displaystyle \sum_{k=0}^\infty \overline{p}(n)q^{n+rT_k} \\
&=\left(\displaystyle \sum_{n=0}^\infty \overline{p}(n)q^n \right)\left(\displaystyle \sum_{k=0}^\infty q^{rT_k} \right).\\
\end{align*}

\noindent Note that the generating function for $\overline{p}(n)$ is
\[\displaystyle \sum_{n=0}^\infty \overline{p}(n)q^n = \dfrac{(-q;q)_\infty}{(q;q)_\infty}.\]
Moreover, from \cite{ballantinemerca},  
\[\displaystyle \sum_{k=0}^\infty q^{rT_k} = \dfrac{(q^{2r};q^{2r})_\infty}{(q^r;q^{2r})_\infty}.\]
Thus,
\[  \displaystyle \sum_{n=0}^{\infty} \srmexover{n}q^n = \left(\displaystyle \sum_{n=0}^\infty \overline{p}(n)q^n \right)\left(\displaystyle \sum_{k=0}^\infty q^{rT_k} \right) = \dfrac{(-q;q)_\infty(q^{2r};q^{2r})_\infty}{(q;q)_\infty(q^{r};q^{2r})_\infty}.\]
    \end{proof}


\subsection{Asymptotic Formula for $\srmexover{n}$}

Here, we generalize our asymptotic result in Theorem \ref{asym1} for least $r$-gaps.\\
\newpage
\noindent \large \textbf{Proof of Theorem \ref{asym2}:}
\normalsize
\begin{proof}
Note that $\overline{p}(n) < \overline{p}(n+1)$ for $n \in \mathbb{N},$ since for every overpartition of $n$, say $n=a_1+a_2+\cdots+a_l$, we correspondingly have $n+1=a_1+a_2+\cdots+a_l+1$ as an overpartition of $n+1.$ Since $\srmexover{n}$ is the sum of the least $r$-gaps taken over all overpartitions of $n$, then we can conclude that $\srmexover{n}$ is a weakly increasing sequence. \\

\noindent Let $A(q)=\dfrac{(-q;q)_\infty(q^{2r};q^{2r})_\infty}{(q;q)_\infty(q^{r};q^{2r})_\infty}$ , where $a(n)=\srmexover{n}$ as in Proposition \ref{asymingham}.\\

\noindent First,
\[A(q)=\dfrac{(-q;q)_\infty(q^{2r};q^{2r})_\infty}{(q;q)_\infty(q^{r};q^{2r})_\infty}=\dfrac{(-q;q)_\infty(-q^r;q^r)^2_\infty (q^r;q^r)_\infty}{(q;q)_\infty}.\]

\noindent Hence, using (\ref{3}), as $t \rightarrow 0^{+},$
\begin{align*}
    A(e^{-t}) & = \dfrac{(-e^{-t};e^{-t})_\infty(-e^{-rt};e^{-rt})_\infty^2(e^{-rt};e^{-rt})_\infty}{(e^{-t};e^{-t})_\infty}\\
    & \sim \dfrac{\frac{1}{\sqrt{2}}e^{\frac{\pi^2}{12t}}\left(\frac{1}{\sqrt{2}}e^{\frac{\pi^2}{12rt}}\right)^2\sqrt{\frac{t}{2\pi}}e^{\frac{\pi^2}{6t}}}{\sqrt{\frac{rt}{2\pi}}e^{\frac{\pi^2}{6rt}}}\\
    &=\dfrac{1}{2\sqrt{2}r}e^{\frac{\pi^2}{4t}}
\end{align*}

\noindent Take $\alpha=\dfrac{1}{2\sqrt{2}r}, \beta=0$ and $C=\frac{\pi^2}{4}$, by Proposition \ref{asymingham},
\[\srmexover{n} \sim \dfrac{\frac{1}{2\sqrt{2}r}}{2\sqrt{\pi}} \dfrac{\left(\frac{\pi^2}{4}\right)^{1/4}}{n^{3/4}}e^{2\sqrt{\frac{\pi^2}{4}n}} = \dfrac{e^{\pi{\sqrt{n}}}}{8rn^{3/4}} \]
as $n \rightarrow \infty$.
\end{proof}


\section{Distribution of $\srmexover{n}$}

\subsection{Preliminaries}
We first discuss some preliminaries about modular forms. We  define the upper-half complex plane 
\[\mathbb{H}=\{z \in \mathbb{C} \mid \text{Im}(z)>0\}\]
and the modular group 
    \[\text{SL}_2(\mathbb{Z}) = \left\{\begin{pmatrix}
      a & b\\
      c & d
  \end{pmatrix} \Big| \, ad-bc = 1; a,b,c,d \in \mathbb{Z}\right\}.\]

For $A={\begin{pmatrix}
      a & b\\
      c & d
  \end{pmatrix}} \in \text{SL}_2(\mathbb{Z}),$ the modular group $\text{SL}_2(\mathbb{Z})$ acts on $\mathbb{H}$ by the following linear fractional transformation: 
  \[Az= {\begin{pmatrix}
      a & b\\
      c & d
  \end{pmatrix}}z =\dfrac{az+b}{cz+d}.\]

\noindent Moreover, if $N \in \mathbb{Z}^+$, we define the following \textbf{congruence subgroups} of $SL_2(\mathbb{Z})$ of level $N$: 
\[\Gamma_0(N):=\left\{\begin{pmatrix}
      a & b\\
      c & d
  \end{pmatrix} \in \text{SL}_2(\mathbb{Z}) \Big| \begin{pmatrix}
      a & b\\
      c & d
  \end{pmatrix} \equiv \begin{pmatrix}
      * & *\\
      0 & *
  \end{pmatrix} \mod N \right\}\]
 \[ \Gamma_1(N):=\left\{\begin{pmatrix}
      a & b\\
      c & d
  \end{pmatrix} \in \text{SL}_2(\mathbb{Z}) \Big| \begin{pmatrix}
      a & b\\
      c & d
  \end{pmatrix} \equiv \begin{pmatrix}
      1 & *\\
      0 & 1
  \end{pmatrix} \mod N \right\}\]
  \[\Gamma(N):=\left\{\begin{pmatrix}
      a & b\\
      c & d
  \end{pmatrix} \in \text{SL}_2(\mathbb{Z}) \Big| \begin{pmatrix}
      a & b\\
      c & d
  \end{pmatrix} \equiv \begin{pmatrix}
      1 & 0\\
      0 & 1
  \end{pmatrix} \mod N\right\}.\]

\noindent Note that the following inclusions are true:
\[\Gamma(N) \subseteq \Gamma_1(N) \subseteq \Gamma_0(N) \subseteq \text{SL}_2(\mathbb{Z}).\\\]

 Modular forms are complex functions on $\mathbb{H}$ that transforms nicely under these congruence subgroups of $\text{SL}_2({\mathbb{Z}}).$ For this paper, we are interested on modular forms transforming nicely with respect to $\Gamma_0(N)$ having a  Nebentypus character $\chi$ defined as follows.

\begin{definition}
    Let $\chi$ be a Dirichlet character modulo $N$ (a positive integer). Then a modular form $f \in M_k(\Gamma_1(N))$ has Nebentypus character $\chi$ if 
    \[f\left(\dfrac{az+b}{cz+d}\right)=\chi(d)(cz+d)^k f(z)\]
    for all $z \in \mathbb{H}$ and all $\begin{pmatrix}
        a & b\\
        c & d
    \end{pmatrix} \in \Gamma_0(N)$. 
    The space of all such modular forms is denoted $M_k(N,\chi)$.
\end{definition}

In particular, we look at modular forms involving the Dedekind eta function which is defined as follows.
\begin{definition}The Dedekind eta function is the function $\eta(z)$ where $z \in  \mathbb{H}:$ 
    \[\eta(z) = e^{\frac{\pi iz}{12}} \prod_{n=1}^{\infty} (1-e^{2\pi i nz}).\]

   \noindent Defining $q:=e^{2\pi i z}$, we have:
    \[\eta(z)=q^{\frac{1}{24}} \prod_{n=1}^{\infty}(1-q^n).\]
\end{definition}

\begin{definition}
      A function $f(z)$ is called an \textbf{eta-product} if it is expressible as  a finite product of the form
    \[f(z)=\prod_{\delta \mid N} \eta^{r_\delta}(\delta z)\]
    where $N$ and each $r_\delta$ is an integer.
\end{definition}

The next two theorems will be used to proved that an eta-product is a holomorphic modular form.

\begin{theorem}[Gordon, Hughes, Newman] \label{gordon}
    If $f(z)= \prod_{\delta \mid N} \eta^{r_\delta}(\delta z)$ is an eta-product for which 
  \begin{align}\displaystyle \sum_{\delta \mid N} \delta r_{\delta} \equiv 0 \Mod {24} \label{4} \end{align}
and
\begin{align}
\displaystyle \sum_{\delta \mid N} \dfrac{N}{\delta}r_{\delta} \equiv 0 \Mod {24} \label{5}
\end{align}
then $f(z)$ satisfies
\[f(Az)=\chi(d)(cz+d)^kf(z)\]
for all $A=\begin{pmatrix}
a & b\\
c & d 
\end{pmatrix} \in \Gamma_0(N)$ where $k=\displaystyle \sum_{\delta \mid N} r_\delta.$ Here the character $\chi$ is defined by $\chi(d)=\left(\dfrac{(-1)^ks}{d}\right)$ and $s=\displaystyle\prod_{\delta \mid N} \delta^{r_\delta}.$
\end{theorem}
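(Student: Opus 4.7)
The plan is to reduce the claim to the classical transformation law of the Dedekind eta function on the full modular group, and then to verify that the two congruence conditions (\ref{4}) and (\ref{5}) are exactly what forces the resulting multiplier system to collapse to the Dirichlet character $\chi$.

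The starting point is the known formula for $\eta$ on $\text{SL}_2(\mathbb{Z})$: for every $\gamma=\begin{pmatrix}a&b\\c&d\end{pmatrix}$ with $c>0$, one has $\eta(\gamma z)=\varepsilon(\gamma)(cz+d)^{1/2}\eta(z)$, where $\varepsilon(\gamma)$ is an explicit $24$th root of unity, expressible via Dedekind sums or, when $c$ is odd, as a Jacobi symbol times an $8$th root of unity. I would take this as a black box. Next, for each $\delta\mid N$ and every $\gamma\in\Gamma_0(N)$, the matrix identity
\[\delta\cdot\gamma z=\gamma_\delta\cdot(\delta z),\qquad \gamma_\delta:=\begin{pmatrix}a&\delta b\\c/\delta&d\end{pmatrix}\in\text{SL}_2(\mathbb{Z}),\]
is valid because $\delta\mid N\mid c$. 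Applying the eta transformation to each $\gamma_\delta$, raising to $r_\delta$, and taking the product gives
\[f(\gamma z)=\Bigl(\prod_{\delta\mid N}\varepsilon(\gamma_\delta)^{r_\delta}\Bigr)(cz+d)^{k/2}f(z),\]
with $k=\sum_{\delta\mid N} r_\delta$ as in the statement. All that remains is to identify the combined multiplier with $\chi(d)$.

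The heart of the argument, and its main obstacle, lies in this identification. Writing each $\varepsilon(\gamma_\delta)$ in Dedekind-sum form decomposes the product $\prod_\delta\varepsilon(\gamma_\delta)^{r_\delta}$ into three pieces: (i) a product of Jacobi symbols in $d$ whose combined value is $\left(\dfrac{(-1)^k s}{d}\right)$ with $s=\prod_\delta\delta^{r_\delta}$; (ii) an exponential of a $\mathbb{Q}$-linear expression in $(a,b,c,d)$ whose coefficient involves $\sum_\delta(N/\delta)r_\delta$; and (iii) an exponential in the Dedekind sums $s(d,c/\delta)$ whose coefficient involves $\sum_\delta\delta r_\delta$. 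Condition (\ref{5}) kills piece (ii) modulo $24$; condition (\ref{4}), combined with Dedekind reciprocity, kills piece (iii). What survives is exactly $\chi(d)$. The bookkeeping has to be handled uniformly in the parity of $c/\delta$, and matching the Dedekind-sum exponentials to (\ref{4}) is the genuinely delicate step; in practice I would cite the original computations of Newman and Ligozat rather than reproduce the full $24$th-root-of-unity arithmetic, since the theorem is used here only as a tool for Theorem \ref{main}.
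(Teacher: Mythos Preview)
The paper does not supply a proof of this theorem: it is quoted as a classical result of Gordon, Hughes, and Newman and is used purely as a black box in the proof of Proposition~4.2 and Theorem~\ref{main}. There is therefore nothing in the paper to compare your argument against. Your sketch is the standard route to this result---pull each factor $\eta(\delta z)$ back via $\gamma_\delta=\begin{pmatrix}a&\delta b\\ c/\delta & d\end{pmatrix}$, multiply the individual eta-multipliers, and then use the two congruences to kill the stray $24$th roots of unity coming from the Dedekind-sum piece and the linear piece respectively---and you yourself correctly observe that in this paper's context the right move is simply to cite the original sources rather than redo the multiplier bookkeeping.

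One small remark: your formula $f(\gamma z)=\bigl(\prod_\delta\varepsilon(\gamma_\delta)^{r_\delta}\bigr)(cz+d)^{k/2}f(z)$ with $k=\sum_\delta r_\delta$ has the correct weight $k/2$, whereas the statement as printed in the paper writes the automorphy factor as $(cz+d)^k$ with the same $k$. The paper's own application (computing the weight $\ell=\tfrac12\sum_\delta r_\delta$ in the proof of Proposition~4.2) agrees with your version, so the exponent in the displayed transformation law appears to be a typo in the paper rather than an error on your part.
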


\begin{theorem}[Ligozat] \label{ligozat}
    Let $c,d$ and $N$ be positive integers with $d \mid N$ and $\text{gcd}(c,d)=1.$ With the notation as above, if the eta-product $f(z)$ satisfies (\ref{4}) and (\ref{5}), then the order of vanishing of $f(z)$ at the cusp $\frac{c}{d}$ is 
         \[\dfrac{1}{24}\displaystyle\sum_{\delta \mid N} \dfrac{N\text{gcd}(d,\delta)^2r_\delta}{\text{gcd}\left(d,\frac{N}{d}\right)d\delta}.\]
\end{theorem}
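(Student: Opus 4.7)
The plan is to compute the order of vanishing of $f(z) = \prod_{\delta \mid N} \eta^{r_\delta}(\delta z)$ at the cusp $c/d$ directly from its $q$-expansion, exploiting the transformation law of the Dedekind eta function under $\text{SL}_2(\mathbb{Z})$. I would pick a matrix $\sigma = \begin{pmatrix} c & b \\ d & a \end{pmatrix} \in \text{SL}_2(\mathbb{Z})$ sending $\infty$ to $c/d$ (possible since $\gcd(c,d)=1$), compute the exponent of the leading power of $q = e^{2\pi i z}$ in $f(\sigma z)$, and then multiply by the width $h_{c/d} = N/(d\,\gcd(d, N/d))$ of the cusp to convert this exponent into the order with respect to the local uniformizer $e^{2\pi i z/h_{c/d}}$. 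Since $f(\sigma z) = \prod_\delta \eta^{r_\delta}(\delta \sigma z)$, it suffices to perform the analysis one $\delta$ at a time and sum the contributions.

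For each $\delta \mid N$, the core step is a Hermite-normal-form decomposition of the determinant-$\delta$ matrix $\begin{pmatrix} \delta & 0 \\ 0 & 1 \end{pmatrix}\sigma = \begin{pmatrix} \delta c & \delta b \\ d & a \end{pmatrix}$. Setting $h = \gcd(d, \delta)$ and using $\gcd(c,d) = 1$ to get $\gcd(\delta c, d) = h$, integer row reduction produces
\[\begin{pmatrix} \delta & 0 \\ 0 & 1 \end{pmatrix}\sigma = \sigma_\delta \begin{pmatrix} h & t \\ 0 & \delta/h \end{pmatrix}\]
for some $\sigma_\delta \in \text{SL}_2(\mathbb{Z})$ and integer $t$. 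Reading off the action on $\mathbb{H}$ yields $\delta\,\sigma(z) = \sigma_\delta\bigl(\tfrac{hz+t}{\delta/h}\bigr)$, which expresses $\eta(\delta \sigma z)$ as the action of an $\text{SL}_2(\mathbb{Z})$-element on a scaled argument.

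I would then invoke the classical transformation $\eta(\gamma w) = \varepsilon(\gamma)(c_0 w + d_0)^{1/2}\eta(w)$ for $\gamma = \begin{pmatrix} a_0 & b_0 \\ c_0 & d_0 \end{pmatrix} \in \text{SL}_2(\mathbb{Z})$, where $\varepsilon(\gamma)$ is a $24$-th root of unity, applied to $\gamma = \sigma_\delta$ and $w = (hz+t)/(\delta/h)$:
\[\eta(\delta \sigma z) = \varepsilon(\sigma_\delta)\,\bigl(c_0 \tfrac{hz+t}{\delta/h} + d_0\bigr)^{1/2}\,\eta\!\left(\tfrac{hz+t}{\delta/h}\right).\]
From the product expansion $\eta(w) = e^{\pi i w/12}\prod_{n\ge 1}(1-e^{2\pi i n w})$, the leading exponential behavior of $\eta\bigl(\tfrac{hz+t}{\delta/h}\bigr)$, when rewritten in $q = e^{2\pi i z}$, is $q^{h^2/(24\delta)}$. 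Raising to the $r_\delta$ and taking the product over $\delta \mid N$, the leading exponent of $q$ in $f(\sigma z)$ equals $\sum_{\delta \mid N} r_\delta \gcd(d,\delta)^2/(24\delta)$.

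Multiplying this exponent by the cusp width $h_{c/d} = N/(d\,\gcd(d,N/d))$ yields exactly the formula $\tfrac{1}{24}\sum_{\delta \mid N} \frac{N\gcd(d,\delta)^2 r_\delta}{\gcd(d,N/d)\,d\,\delta}$ claimed in the theorem. The main obstacle is the bookkeeping in the Hermite decomposition together with the compatibility of branches of the square-root automorphy factor and the eta-multiplier; happily, since only the leading exponent of $q$ controls the order of vanishing, the non-vanishing prefactor $\varepsilon(\sigma_\delta)\bigl(c_0 w + d_0\bigr)^{1/2}$ drops out entirely and does not affect the final count. Note that hypotheses (\ref{4}) and (\ref{5}) from Theorem \ref{gordon} are not used in this calculation per se; they enter only implicitly, by ensuring that $f$ is modular on $\Gamma_0(N)$ so that ``order of vanishing at the cusp $c/d$'' is a well-defined invariant independent of the choice of $\sigma$.
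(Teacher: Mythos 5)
The paper does not actually prove this statement: Theorem~\ref{ligozat} is quoted as a classical result of Ligozat (stated alongside the Gordon--Hughes--Newman criterion) and is used as a black box, so there is no in-paper argument to compare yours against. That said, your sketch is the standard derivation and is essentially sound. The key identities all check out: $\gcd(\delta c,d)=\gcd(\delta,d)$ because $\gcd(c,d)=1$; the Hermite factorization $\bigl(\begin{smallmatrix}\delta&0\\0&1\end{smallmatrix}\bigr)\sigma=\sigma_\delta\bigl(\begin{smallmatrix}h&t\\0&\delta/h\end{smallmatrix}\bigr)$ exists because the first column $(\delta c,d)^T$ has content $h$; the leading behavior $e^{\pi i w/12}$ with $w=(hz+t)h/\delta$ gives exponent $h^2/(24\delta)$ in $q=e^{2\pi iz}$; and the width of the cusp $c/d$ in $\Gamma_0(N)$ is $N/\gcd(d^2,N)=N/(d\gcd(d,N/d))$ since $d\mid N$, so multiplying through reproduces the stated formula exactly. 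Two small points worth making explicit if you wrote this up fully: (i) the leading coefficient of each factor is a nonzero constant (a root of unity times the polynomially bounded automorphy factor), so leading exponents genuinely add across the product and nothing cancels --- this also covers negative $r_\delta$, where the ``order of vanishing'' is a pole order; and (ii) your closing remark is right that (\ref{4}) and (\ref{5}) play no role in the local computation itself, only in guaranteeing that the result is an invariant of the $\Gamma_0(N)$-orbit of the cusp rather than of the particular scaling matrix $\sigma$.
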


\subsection{Proof of Main Result}

Before we prove Theorem \ref{main}, we prove two propositions first. 
\begin{proposition}\label{fourier}
     Let $k$ be a positive integer. Then
    \[f_{r,k}(z):=\dfrac{\eta(48z)\eta(24rz)^{2^k-1}}{\eta(24z)^2\eta(48rz)^{2^{k-1}-2}} \equiv \displaystyle\sum_{n=0}^\infty \sigma_r\overline{\text{mex}}(n)q^{24n+3r} (\text{mod } 2^k) \]
\end{proposition}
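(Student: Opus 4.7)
The plan is to establish this in two stages: first produce an exact (not just mod $2^k$) identity between an eta-quotient and the shifted series $\sum \sigma_r\overline{\text{mex}}(n)\,q^{24n+3r}$, and then use a standard $2$-adic congruence to replace that eta-quotient by $f_{r,k}(z)$ modulo $2^k$.

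For the exact identity, I would start from the generating function in Theorem \ref{gen} and rewrite the two mixed factors as $(-q;q)_\infty = (q^2;q^2)_\infty/(q;q)_\infty$ and $(q^r;q^{2r})_\infty = (q^r;q^r)_\infty/(q^{2r};q^{2r})_\infty$. This collapses the generating function to a ratio of pure Pochhammer symbols $(q^m;q^m)_\infty$. Substituting $q \mapsto q^{24}$, multiplying through by $q^{3r}$, and invoking $(q^m;q^m)_\infty = q^{-m/24}\eta(mz)$, I expect the accumulated $q$-prefactors to cancel exactly with the $q^{3r}$ twist and yield the clean identity
\[
\sum_{n\geq 0}\sigma_r\overline{\text{mex}}(n)\,q^{24n+3r}=\frac{\eta(48z)\,\eta(48rz)^2}{\eta(24z)^2\,\eta(24rz)}.
\]

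For the congruence stage, I would take the ratio of $f_{r,k}(z)$ with the right-hand side above. Direct cancellation of $\eta(48z)/\eta(24z)^2$ reduces the ratio to $\eta(24rz)^{2^k}/\eta(48rz)^{2^{k-1}}$; since both eta powers carry the same fractional $q$-prefactor $q^{r\cdot 2^k}$, this simplifies further to $(q^{24r};q^{24r})_\infty^{2^k}/(q^{48r};q^{48r})_\infty^{2^{k-1}}$. Finally, the congruence $(1-x)^{2^k}\equiv (1-x^2)^{2^{k-1}}\pmod{2^k}$, which follows by iterated squaring from $(1-x)^2 \equiv 1-x^2 \pmod 2$ together with the lifting lemma $a \equiv b \pmod{2^j} \Rightarrow a^2 \equiv b^2 \pmod{2^{j+1}}$, applied term by term with $x=q^{24rn}$ for each $n \geq 1$ shows this ratio is $\equiv 1 \pmod{2^k}$.

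The main obstacle is purely bookkeeping: carefully verifying that all the $q$-exponent shifts from the substitution $q \mapsto q^{24}$, the eight conversions $(q^m;q^m)_\infty \mapsto q^{-m/24}\eta(mz)$, and the $q^{3r}$ twist combine to cancel exactly, so that one obtains a genuine equality rather than a spurious fractional power of $q$. Once the eta-quotient identity is in place, the mod-$2^k$ comparison with $f_{r,k}(z)$ reduces to the standard power-of-two congruence for $(q;q)_\infty$ and is essentially automatic.
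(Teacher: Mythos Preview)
Your proposal is correct and follows essentially the same route as the paper: the paper likewise establishes the exact identity $\sum_{n\ge 0}\sigma_r\overline{\text{mex}}(n)\,q^{24n+3r}=\eta(48z)\eta(48rz)^2/(\eta(24z)^2\eta(24rz))$ via the same Pochhammer manipulations, and then multiplies by the auxiliary factor $g(z)^{2^{k-1}}=\eta(24rz)^{2^k}/\eta(48rz)^{2^{k-1}}\equiv 1\pmod{2^k}$, which is exactly the ratio you compute. The only cosmetic difference is that the paper names this factor $g(z)^{2^{k-1}}$ up front rather than taking the quotient $f_{r,k}/(\text{exact eta-quotient})$ as you do.
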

   
\noindent \textbf{Proof:}
Consider $$g(z)=\dfrac{\eta(24rz)^2}{\eta(48rz)}= \dfrac{(q^{24r};q^{24r})^2_\infty}{(q^{48r};q^{48r})_\infty}.$$ \\
By the binomial theorem, $(q^r;q^r)^{2^k}_\infty \equiv (q^{2r};q^{2r})^{2^{k-1}}_\infty \Mod{2^k}.$ \\

\noindent Thus, $(q^{24r};q^{24r})^{2^k}_\infty \equiv (q^{48r};q^{48r})^{2^{k-1}}_\infty \Mod {2^k}$, and so \[g^{2^{k-1}}(z)=\dfrac{(q^{24r};q^{24r})^{2^k}_\infty}{(q^{48r};q^{48r})^{2^{k-1}}_\infty} \equiv 1 \Mod {2^k}.\]

\noindent Now, consider 
\begin{align*}
\dfrac{\eta(48z)\eta(48rz)^2}{\eta(24z)^2\eta(24rz)}\cdot  g^{2^{k-1}}(z)&= \dfrac{\eta(48z)\eta(48rz)^2}{\eta(24z)^2\eta(24rz)}\cdot \dfrac{\eta(24rz)^{2^k}}{\eta(48rz)^{2^k-1}}\\
&=\dfrac{\eta(48z)\eta(24rz)^{2^k-1}}{\eta(24z)^2\eta(48rz)^{2^{k-1}-2}}\\
&=f_{r,k}(z).
\end{align*}

\noindent Observe that
\begin{align*}
    f_{r,k}(z)&=\dfrac{\eta(48z)\eta(48rz)^2}{\eta(24z)^2\eta(24rz)}\cdot  g^{2^k-1}(z)\\
    &\equiv\dfrac{\eta(48z)\eta(48rz)^2}{\eta(24z)^2\eta(24rz)}  
  \Mod {2^k}\\
    &=q^{3r} \dfrac{(q^{48};q^{48})_\infty(q^{48r};q^{48r})^2_\infty}{(q^{24};q^{24})^2_\infty (q^{24r};q^{24r})_\infty}.
\end{align*}

\noindent Note that
\begin{align*}
    \displaystyle\sum_{n=0}^\infty \sigma_r\overline{\text{mex}}(n)q^n &= \dfrac{(-q;q)_\infty (q^{2r};q^{2r})_\infty}{(q;q)_\infty (q^{r};q^{2r})_\infty}\\
    &=\dfrac{(-q;q)_\infty (q^{2r};q^{2r})^2_\infty}{(q;q)_\infty (q^{r};q^{r})_\infty}\\
    &=\dfrac{(q^2;q^2)_\infty (q^{2r};q^{2r})^2_\infty}{(q;q)^2_\infty (q^{r};q^{r})_\infty}.
\end{align*}

Hence, \[f_{r,k}(z) \equiv q^{3r}  \displaystyle\sum_{n=0}^\infty \sigma_r\overline{\text{mex}}(n)q^{24n} \Mod {2^k} =  \displaystyle\sum_{n=0}^\infty \sigma_r\overline{\text{mex}}(n)q^{24n+3r}.\]

\begin{proposition}
    Let  $r=2^m \cdot 3^n$ where $m,n \in \mathbb{Z}_{\geq 0}$ and $k \geq m+2n+1$ be an integer greater than 3. Then
$f_{r,k}(z) \in M_{2^{k-2}}(\Gamma_0(N),\chi)$, where \[N=\begin{cases}
  2^7 \cdot 3^{n+1}, \hspace{0.16in} m=0,1,2\\
  2^{m+4}\cdot 3^{n+1}, m \geq 3
\end{cases}\].
\end{proposition}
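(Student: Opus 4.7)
The plan is to apply the Gordon--Hughes--Newman criterion (Theorem \ref{gordon}) together with Ligozat's cusp formula (Theorem \ref{ligozat}) to the eta-product decomposition of $f_{r,k}$ with nonzero exponents $r_{24}=-2$, $r_{48}=1$, $r_{24r}=2^k-1$, and $r_{48r}=2-2^{k-1}$. In either prescribed form of $N$ one has $48r\mid N$, so all four relevant $\delta$'s divide $N$, and the weight is $\tfrac{1}{2}\sum_\delta r_\delta = \tfrac{1}{2}\bigl(-2+1+(2^k-1)+(2-2^{k-1})\bigr)=2^{k-2}$, matching the statement.

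I would first verify the two congruences (\ref{4}) and (\ref{5}) of Theorem \ref{gordon}. A direct calculation yields
\[\sum_\delta \delta\,r_\delta = -48 + 48 + 24r(2^k-1) + 48r(2-2^{k-1}) = 72r \equiv 0 \Mod{24}.\]
For (\ref{5}), I split on $m$: when $m\geq 3$ the sum reduces to $3(2^{k-1}-r)$, which is divisible by $24$ once $k\geq 4$ because $8\mid r$ and $8\mid 2^{k-1}$; when $m\leq 2$ the sum reduces to $-24\cdot 3^n + 3\cdot 2^{k+2-m}$, whose second summand is divisible by $24$ as soon as $k\geq m+1$. Both are forced by $k>3$.

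The crux is showing nonnegative order of vanishing at every cusp of $\Gamma_0(N)$. For each $d\mid N$, Theorem \ref{ligozat} reduces this to checking $S(d):=\sum_\delta \gcd(d,\delta)^2 r_\delta/\delta\geq 0$. The two ``large'' terms (those with coefficient $\pm 2^{k-1}$) combine into
\[\frac{2^{k-1}}{48r}\Bigl(4\gcd(d,24r)^2 - \gcd(d,48r)^2\Bigr),\]
and a short $2$-adic comparison (using $v_2(24r)=m+3$, $v_2(48r)=m+4$) shows the bracket equals $3\gcd(d,24r)^2$ when $v_2(d)\leq m+3$ and equals $0$ when $v_2(d)\geq m+4$. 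In either regime the dominant contribution is nonnegative, and the remaining bounded terms must be controlled by a case analysis on $d=2^a3^b$. The tightest constraint comes from the cusp at infinity ($d=1$), where
\[S(1) \;=\; \tfrac{1}{16}\bigl(2^{k-1}/r - 1\bigr),\]
so both nonnegativity and the integrality of the corresponding order force $2^{k-1-m}\geq 3^n$; the hypothesis $k\geq m+2n+1$ supplies precisely this, via $2^{k-1-m}\geq 2^{2n}>3^n$. This cusp-by-cusp verification is the main obstacle of the argument.

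Finally, Theorem \ref{gordon} identifies the Nebentypus character as $\chi(d)=\left(\tfrac{(-1)^{2^{k-2}}s}{d}\right) = \left(\tfrac{s}{d}\right)$ with $s=\prod_\delta \delta^{r_\delta}$, since $(-1)^{2^{k-2}}=1$ for $k\geq 4$. Because $f_{r,k}$ is manifestly holomorphic on $\mathbb{H}$ (as $\eta(\delta z)$ is nonvanishing on the upper half-plane), combining the transformation law from Theorem \ref{gordon} with the holomorphicity at every cusp of $\Gamma_0(N)$ established via Theorem \ref{ligozat} yields $f_{r,k}\in M_{2^{k-2}}(\Gamma_0(N),\chi)$, as claimed.
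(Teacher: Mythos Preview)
Your approach is essentially the same as the paper's: both apply the Gordon--Hughes--Newman criterion (Theorem \ref{gordon}) and Ligozat's formula (Theorem \ref{ligozat}) to the eta-quotient $f_{r,k}$, and your computations of the weight, of the two congruences (\ref{4}) and (\ref{5}), and of $S(1)$ are all correct (your grouping of the $2^{k-1}$-terms into $\tfrac{2^{k-1}}{48r}\bigl(4\gcd(d,24r)^2-\gcd(d,48r)^2\bigr)$ is a tidy reorganization the paper does not make explicit). The paper then carries out in full the cusp-by-cusp case analysis on $d=2^t3^s$ (in six cases) that you correctly flag as ``the main obstacle'' but do not actually perform; one small correction: at $n=0$ your strict inequality $2^{2n}>3^n$ fails, but only $2^{2n}\geq 3^n$ is needed for $S(1)\geq 0$.
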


 \begin{proof} 
 Let $r=2^m \cdot 3^n$ where $m,n \in \mathbb{Z}_{\geq 0}.$\\

   \noindent First, the weight of $f_{r,k}(z)$ is:
    \[\ell=\dfrac{1}{2}\displaystyle \sum_{\delta |N} r_{\delta} = \dfrac{1}{2}\left[1+(2^k-1)-2-(2^{k-1}-2)\right]=2^{k-1}-2^{k-2}=2^{k-2}.\]

    \noindent Second, since $f_{r,k}(z)=\dfrac{\eta(48z)\eta(24rz)^{2^k-1}}{\eta(24z)^2\eta(48rz)^{2^{k-1}-2}},$ then $\delta_1=48, \delta_2=24r, \delta_3=24$ and $\delta_4=48r$ with $r_{48}=1, r_{24r}=2^k-1, r_{24}=-2,$ and $r_{48r}=2-2^{k-1}.$ \\
    
    Clearly, $f_{r,k}(z)$ satisfies equation (\ref{4}) since
`   \[\displaystyle \sum_{\delta | N} \delta r_\delta = 48\cdot 1 + 24r \cdot (2^k-1) + 24 \cdot (-2) + 48r \cdot (2-2^{k-1}) \equiv 0 \Mod{24}.\]
    
    Moreover, to satisfy equation (\ref{5}), we can let $N=48ru,$ where $u$ is the smallest positive integer satisfying 
    \[\displaystyle \sum_{\delta | N} \dfrac{N}{\delta} r_{\delta} \equiv 0 \Mod {24}.\]
    Then, 
    \begin{align*}
    \displaystyle \sum_{\delta | N} \dfrac{N}{\delta}r_\delta &=\dfrac{48ru}{48}+\dfrac{48ru}{24r}(2^k-1)-\dfrac{48ru}{24}(2)-\dfrac{48ru}{48r}(2^{k-1}-2)\\
    &=ru+2u(2^k-1)-4ru-u(2^{k-1}-2)\\
    &=u(2^{k+1}-2^{k-1}-3r)\\
    &=u(3\cdot 2^{k-1} -3r) \equiv 0 \Mod{24}
    \end{align*}

\noindent  We have the following:
\begin{itemize}
\item If $m=0,$ then $u=8$, and so $N=48 \cdot (2^0\cdot3^n) \cdot 8 =2^7\cdot3^{n+1}.$
\item If $m=1,$ then $u=4$, and so $N=48 \cdot (2^1\cdot3^n) \cdot 4 =2^7\cdot3^{n+1}.$
\item If $m=2,$ then $u=2$, and so $N=48 \cdot (2^1\cdot3^n) \cdot 4 =2^7\cdot3^{n+1}.$
\item If $m\geq 3,$ then $u=1$, and so  $N=48\cdot (2^m\cdot3^n)\cdot 1 =2^{m+4}\cdot3^{n+1}.$ 
\end{itemize}

To prove that $f_{r,k}(z) \in M_{2k-2}(\Gamma_0(N),\chi)$, it suffices to show that $f_{r,k}(z)$ is holomorphic at all cusps of $\Gamma_0(N).$\\

From Theorem \ref{ligozat}, the order of vanishing of $f_{r,k}(z)$ at the cusp $\frac{c}{d}$ where $d|N$ and gcd$(c,d)=1$, is:

\[\dfrac{N}{24} \displaystyle \sum_{\delta |N} \dfrac{\text{gcd}(d,\delta)^2r_\delta}{\text{gcd}\left(d, \frac{N}{d}\right)d\delta}\]

Hence, $f_{r,k}(z)=\dfrac{\eta(48z)\eta(24rz)^{2^k-1}}{\eta(24z)^2\eta(48rz)^{2^{k-1}-2}}$ is holomorphic at the cusp $\frac{c}{d}$ if and only if
\[    \dfrac{N}{24} \displaystyle \sum_{\delta |N} \dfrac{\text{gcd}(d,\delta)^2r_\delta}{\text{gcd}\left(d, \frac{N}{d}\right)d\delta} \geq 0  \iff \displaystyle \sum_{\delta |N} \dfrac{\text{gcd}(d,\delta)^2r_\delta}{\delta} \geq 0. \]
That is,   \[\dfrac{\text{gcd}(d,48)^2}{48}-2\dfrac{\text{gcd}(d,24)^2}{24}+(2^k-1)\dfrac{\text{gcd}(d,24r)^2}{24r}-(2^{k-1}-2)\dfrac{\text{gcd}(d,48r)^2}{48r} \geq 0.\]

\noindent Equivalently, 
     \begin{equation}
          r\text{gcd}(d,48)^2-4r\text{gcd}(d,24)^2+(2^{k+1}-2)\text{gcd}(d,24r)^2-(2^{k-1}-2)\text{gcd}(d,48r)^2 \geq 0. \label{LHS}
     \end{equation}
    
Now, if $N=2^7\cdot 3^{n+1},$ then $d=2^t\cdot 3^s, 0 \leq t \leq 7, 0 \leq s \leq n+1$. Similarly, if $N=2^{m+4}\cdot 3^{n+1},$ then $d=2^t\cdot 3^s, 0 \leq t \leq m+4, 0 \leq s \leq n+1$. \\

Let $(\star)$ be the left-hand side of inequality (\ref{LHS}). We now prove that $(\star) \geq 0$ for $k\geq m+2n+1$. We divide our proof into 6 cases.\\

\noindent \textbf{Case 1: $d=1$}\\
\noindent We have $\text{gcd}(d,48)=1$, $\text{gcd}(d,24)=1,$ $\text{gcd}(d,24r)=1$, and $\text{gcd}(d,48r)=1$.
\begin{align*}
(\star) &=(2^m\cdot3^n)-4(2^m\cdot3^n)+(2^{k+1}-2)-(2^{k-1}-2)\\
&=2^{k+1}-2^{k-1}-3\cdot(2^m\cdot 3^n)\\
&=3\cdot 2^{k-1} - 2^{m} \cdot 3^{n+1}
\end{align*}

\noindent If we let $k \geq  m+2n+1$, then
\begin{align*}
3\cdot 2^{k-1} &\geq 3\cdot 2^{m+2n}\\
&=  3\cdot 2^{m} \cdot 2^{2n}\\
&\geq  3\cdot 2^{m} \cdot 3^{n}\\
& =  2^{m} \cdot 3^{n+1},
\end{align*}

\noindent proving that $(\star) \geq 0$ for $k \geq m+2n+1$.\\

\noindent \textbf{Case 2: $d=3^s, 1 \leq s \leq n+ 1$}\\
\noindent We have $\text{gcd}(d,48)=3$, $\text{gcd}(d,24)=3,$ $\text{gcd}(d,24r)=\text{gcd}(d, 2^{m+3}\cdot3^{n+1})=3^s$, and $\text{gcd}(d,48r)=\text{gcd}(d, 2^{m+4}\cdot3^{n+1})=3^s$.
\begin{align*}
(\star) &=(2^m\cdot3^n)\cdot9 -4(2^m\cdot3^n)\cdot 9 +(2^{k+1}-2) \cdot 3^{2s}-(2^{k-1}-2)\cdot 3^{2s}\\
&=3^{2s}\cdot 3\cdot 2^{k-1} - 3 \cdot 9 \cdot (2^m\cdot3^n)\\
&=2^{k-1} \cdot 3^{2s+1} -2^m\cdot 3^{n+3}
\end{align*}

\noindent If we let $k \geq  m+2n-4s+5$, then
\begin{align*}
 2^{k-1} \cdot 3^{2s+1} & \geq 2^{m+2n-4s+4} \cdot 3^{2s+1} \\
&=  2^{m} \cdot 2^{2n-4s+4} \cdot 3^{2s+1} \\
&\geq 2^{m} \cdot 3^{n-2s+2} \cdot 3^{2s+1} \\
& =  2^{m+4} \cdot 3^{n+3},
\end{align*}

\noindent proving that $(\star) \geq 0$ for $k \geq m+2n-4s+5$. Moreover, since $m+2n-4s+5 \leq m+2n+1$, it follows that $(\star) \geq 0$ for $k \geq m+2n+1$. \\

\noindent \textbf{Case 3: $d=2^t, 0 < t \leq 3$}\\
\noindent We have $\text{gcd}(d,48)=2^t$, $\text{gcd}(d,24)=2^t,$ $\text{gcd}(d,24r)=2^t$, and $\text{gcd}(d,48r)=2^t$.
\begin{align*}
(\star) &=(2^m\cdot3^n)\cdot2^{2t} -4(2^m\cdot3^n)\cdot 2^{2t} +(2^{k+1}-2) \cdot 2^{2t}-(2^{k-1}-2)\cdot 2^{2t}\\
&=2^{2t}\cdot 3\cdot 2^{k-1} - 3 \cdot 2^{2t} \cdot (2^m\cdot3^n)\\
&=2^{k+2t-1} \cdot 3 -2^{m+2t}\cdot 3^{n+1}
\end{align*}

\noindent If we let $k \geq  m+2n+1$, then
\begin{align*}
 2^{k+2t-1} \cdot 3 & \geq 2^{m+2n+2t} \cdot 3 \\
&=  2^{m+2t} \cdot 2^{2n} \cdot 3\\
&\geq 2^{m+2t} \cdot 3^{n} \cdot 3 \\
& =  2^{m+2t} \cdot 3^{n+1},
\end{align*}

\noindent proving that $(\star) \geq 0$ for $k \geq m+2n+1$.\\

\noindent \textbf{Case 4: $d=2^t, 3<  t \leq m+4$}\\
\noindent We have $\text{gcd}(d,48)=2^4$, $\text{gcd}(d,24)=2^3,$ $\text{gcd}(d,24r)=\text{gcd}(d, 2^{m+3}\cdot3^{n+1})=2^t$, and $\text{gcd}(d,48r)=\text{gcd}(d, 2^{m+4}\cdot3^{n+1})=2^t$.
\begin{align*}
(\star)&=(2^m\cdot3^n)\cdot2^{8} -4(2^m\cdot3^n)\cdot 2^{6} +(2^{k+1}-2) \cdot 2^{2t}-(2^{k-1}-2)\cdot 2^{2t}\\
&=2^{2t}\cdot 3\cdot 2^{k-1}\\
&=2^{k+2t-1} \cdot 3 \geq 0 \text{ for } k \geq 1
\end{align*}

\noindent \textbf{Case 5: $d=2^t \cdot 3^s, 0 < t \leq 3, 1 \leq s \leq n+1$}\\
\noindent We have $\text{gcd}(d,48)=2^t\cdot 3$, $\text{gcd}(d,24)=2^t \cdot 3,$ $\text{gcd}(d,24r)=\text{gcd}(d, 2^{m+3}\cdot3^{n+1})=2^t \cdot 3^s$, and $\text{gcd}(d,48r)=\text{gcd}(d, 2^{m+4}\cdot3^{n+1})=2^t \cdot 3^s$.
\begin{align*}
(\star)=&(2^m\cdot3^n)\cdot(2^{2t}\cdot 3^2) -4(2^m\cdot3^n)\cdot(2^{2t}\cdot 3^2)+(2^{k+1}-2)\cdot(2^{2t}\cdot 3^{2s})\\
&-(2^{k-1}-2)\cdot(2^{2t}\cdot 3^{2s})\\
=&(2^{2t}\cdot 3^{2s}) \cdot 3\cdot 2^{k-1} - 3 \cdot(2^{2t}\cdot 3^2) \cdot (2^m\cdot3^n)\\
=&2^{k+2t-1} \cdot 3^{2s+1} -2^{m+2t}\cdot 3^{n+3}
\end{align*}

\noindent If we let $k \geq  m+2n-4s+5$, then
\begin{align*}
2^{k+2t-1} \cdot 3^{2s+1} & \geq 2^{m+2n+2t-4s+4} \cdot 3^{2s+1} \\
&=  2^{m+2t} \cdot 2^{2n-4s+4} \cdot 3^{2s+1}\\
&\geq 2^{m+2t} \cdot 3^{n-2s+2} \cdot 3^{2s+1} \\
& =  2^{m+2t} \cdot 3^{n+3},
\end{align*}

\noindent proving that $(\star) \geq 0$ for $k \geq m+2n-4s+5$. Moreover, since $m+2n-4s+5 \leq m+2n+1$, it follows that $(\star) \geq 0$ for $k \geq m+2n+1$.\\

\noindent \textbf{Case 6: $d=2^t \cdot 3^s, 3 < t \leq m+4, 1 \leq s \leq n+1$}\\
\noindent We have $\text{gcd}(d,48)=2^4\cdot 3$, $\text{gcd}(d,24)=2^3 \cdot 3,$ $\text{gcd}(d,24r)=\text{gcd}(d, 2^{m+3}\cdot3^{n+1})=2^t \cdot 3^s$, and $\text{gcd}(d,48r)=\text{gcd}(d, 2^{m+4}\cdot3^{n+2})=2^t \cdot 3^s$.
\begin{align*}
(\star)=&(2^m\cdot3^n)\cdot(2^8\cdot 3^2) -4(2^m\cdot3^n)\cdot(2^6\cdot 3^2)+(2^{k+1}-2)\cdot(2^{2t}\cdot 3^{2s})\\
&-(2^{k-1}-2)\cdot(2^{2t}\cdot 3^{2s})\\
=&(2^{2t}\cdot 3^{2s}) \cdot 3\cdot 2^{k-1}\\
=&2^{k+2t-1}\cdot 3^{2s+1} \geq 0 \text{ for } k \geq 1.
\end{align*}

In all possible cases, we have that $(\star) \geq 0$ for $k \geq m+2n+1$ where $k$ is a positive integer greater than 3. Hence, by Theorem \ref{ligozat}, $f_{r,k}(z)$ is a modular form of weight $2^{k-2}.$

\end{proof}
Lastly, we will use Serre's Theorem \cite{serre} regarding the coefficients of the Fourier expansion of a holomorphic modular form to prove our final result.

\begin{theorem} [Serre]
Let $k,m$ be positive integers. If $f(z) \in M_k(\Gamma_0(N), \chi)$ has Fourier expansion $f(z) = \sum_{n=0}^{\infty} c(n)q^n \in \mathbb{Z}[[q]],$ then there is a constant $\alpha >0$ such that 
\[\#\{n\leq X: c(n) \not\equiv 0\mod m\} = \mathcal{O}\left(\dfrac{X}{\log^\alpha X}\right).\]
\end{theorem}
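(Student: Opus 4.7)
The plan is to follow Serre's original 1976 approach, combining the action of Hecke operators, the Chebotarev density theorem, and a Selberg--Delange style count. The first step is a reduction to prime-power moduli: since $c(n) \not\equiv 0 \pmod m$ iff $c(n) \not\equiv 0 \pmod{\ell^j}$ for at least one prime power $\ell^j$ exactly dividing $m$, a union bound reduces the problem to $m = \ell^j$ a prime power.

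The second step produces a positive-density set of Hecke-annihilating primes. The space $M_k(\Gamma_0(N),\chi)$ is finite-dimensional, so the Hecke operators $T_p$ for $p \nmid N\ell$ act on its reduction modulo $\ell^j$ through a commutative algebra whose image in the endomorphism ring of this finite $\mathbb{Z}/\ell^j\mathbb{Z}$-module is finite. Decomposing $f$ into generalized Hecke-eigen components (over a finite extension of $\mathbb{Z}/\ell^j\mathbb{Z}$ if necessary) and attaching to each eigensystem the mod-$\ell^j$ reduction of its Deligne (resp.\ Deligne--Serre) $\ell$-adic Galois representation, the joint splitting field $L/\mathbb{Q}$ is a single finite Galois extension. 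Applying Chebotarev's density theorem to the conjugacy classes in $\mathrm{Gal}(L/\mathbb{Q})$ whose Frobenius traces force $T_p f \equiv 0 \pmod{\ell^j}$ produces a set $\mathcal{P}$ of primes of positive Dirichlet density $\delta > 0$.

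The third step translates the Hecke vanishing into a coefficient congruence and performs the count. From $T_p f = \sum_n \bigl(c(pn) + \chi(p)\,p^{k-1}\,c(n/p)\bigr) q^n$, the condition $T_p f \equiv 0 \pmod{\ell^j}$ yields $c(pn) \equiv -\chi(p)\,p^{k-1}\,c(n/p) \pmod{\ell^j}$ for every $p \in \mathcal{P}$, with the convention $c(n/p) = 0$ when $p \nmid n$. A short induction on $v_p(n)$ shows that if some $p \in \mathcal{P}$ divides $n$ to an odd power, then $c(n) \equiv 0 \pmod{\ell^j}$. Contrapositively, any $n$ with $c(n) \not\equiv 0 \pmod{\ell^j}$ factors as $n = a^2 b$ with $b$ free of prime divisors in $\mathcal{P}$. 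The Wirsing / Selberg--Delange theorem bounds the count of $\mathcal{P}$-free integers up to $Y$ by $\mathcal{O}(Y/\log^{\delta} Y)$; summing over $a \leq \sqrt{X}$ gives $\mathcal{O}(X/\log^{\alpha} X)$ for some $\alpha > 0$ depending on $\delta$.

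The main obstacle lies in the second step: producing the density-$\delta$ set $\mathcal{P}$ requires invoking Deligne's construction of the $\ell$-adic Galois representations attached to cusp-form eigensystems (and Deligne--Serre in weight one), together with the verification that the mod-$\ell^j$ reductions factor through a single finite Galois extension so that one Chebotarev argument covers all eigencomponents simultaneously. Secondary technicalities include handling non-eigen $f$ by imposing the vanishing condition on every Hecke-eigen piece at once, treating any Eisenstein contribution to $f$ by a direct analytic argument on divisor sums, and carrying out the Wirsing / Selberg--Delange estimate carefully enough to extract the explicit exponent $\alpha$.
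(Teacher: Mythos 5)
The paper does not prove this statement at all: it is quoted verbatim as Serre's theorem, with a citation to Serre's 1974 Comptes Rendus note, and is used as a black box in the proof of Theorem \ref{main}. So there is no in-paper argument to compare against; what you have written is an outline of the actual proof of the cited result. As such an outline it is essentially faithful to Serre's strategy (and to the account in Ono's \emph{Web of Modularity}, Theorem 2.65): reduce to prime-power moduli, produce a positive-density set $\mathcal{P}$ of primes with $T_p f \equiv 0 \pmod{\ell^j}$ via Galois representations and Chebotarev, deduce that $c(n)\equiv 0$ whenever some $p\in\mathcal{P}$ exactly divides $n$ to an odd power, and count the exceptional $n$ by a Wirsing/Selberg--Delange sieve. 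Your step 3 is complete and correct as stated (the induction on $v_p(n)$ and the $n=a^2b$ decomposition both check out, modulo the routine splitting of the sum over $a$ near $\sqrt{X}$). The genuine content you are deferring all sits in step 2, and you correctly identify it: the existence of representations attached to eigensystems modulo $\ell^j$ for $j>1$ (not merely mod $\ell$) requires either working with the full $\ell$-adic image and an $\ell$-adic Chebotarev argument, as Serre does, or a gluing over the Artinian Hecke algebra; the weight-one case needs Deligne--Serre; and the Eisenstein components should be handled by the same $T_p$-annihilation mechanism using sums of Dirichlet characters rather than by a separate ad hoc divisor-sum estimate. If the intent were to make the paper self-contained, this sketch would need those ingredients filled in; as a description of why the cited theorem is true, it is accurate.
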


\noindent \large \textbf{Proof of Theorem \ref{main}}
\normalsize
\begin{proof}
Let $r = 2^m \cdot 3^n$ where $m,n \in \mathbb{Z}_{\geq 0}$ and  $k \geq m+2n+1, \, k \geq 3$ be a positive integer.  Since $f_{r,k}(z) \in M_{2^{k-2}}(\Gamma_0(N),\chi)$ and the Fourier coefficients of $f_{r,k}(z)$ are integers, then by Serre's Theorem, we can find a constant $\alpha >0$ such that 
\[\#\{n\leq X: \sigma_r\overline{mex}(n) \not\equiv 0\mod 2^k\} = \mathcal{O}\left(\dfrac{X}{\log^\alpha X}\right),\]
for $k \geq m+2n+1.$\\

Then 
\[\displaystyle \lim_{X \to +\infty} \dfrac{\#\{n\leq X: \sigma_r\overline{mex}(n) \equiv 0\mod 2^k\}}{X}=1.\]

Equivalently, for  almost every nonnegative integer $n$ lying in an arithmetic progression, $\sigma_r\overline{mex}(n)$ is a multiple of $2^k$ where $r = 2^m \cdot 3^n$ where $m,n \in \mathbb{Z}_{\geq 0}$ and  $k \geq m+2n+1, k \geq 3$. Consequently, $\sigma_r\overline{mex}(n)$ is a multiple of $2^k$, where $k \geq 1$.
\end{proof}

\vspace*{12pt}

\vspace*{12pt}


\begin{thebibliography}{99}     

\bibitem{andrews2} G. E. Andrews, \textit{The Theory of Partitions,} 
 Addison-Wesley Publishing Company, 1976.

\bibitem{andrewsnewman} G. E. Andrews and D. Newman, \textit{Partitions and the minimal excludant}, Ann. Comb., 23 (2019), 249–254.

\bibitem{ballantinemerca}  C. Ballantine and M. Merca, \textit{Bisected theta series, least r-gaps in partitions, and polygonal numbers}, Ramanujan J. 52.2 (2020), 433–444.

\bibitem{bhorjaetal} S. C. Bhoria, P. Eyyunni, P. S. Kaur and B. Maji, \textit{Minimal excludant over partitions into distinct parts},
arXiv:2105.13875v1, 2021.

\bibitem{chakrabortyray} K. Chakraborty and C. Ray, \textit{Distribution of generalized mex-related integer partitions}, Hardy-Ramanujan
Journal, Hardy-Ramanujan Society (Special Commemorative volume in honour of Srinivasa Ramanujan), 43 (2021), 122–128.

\bibitem{grundy} P. Grundy, \textit{Mathematics and Games}, Eureka 2 (1939),  6–9.

\bibitem{ingham} A. E. Ingham, \textit{A Tauberian theorem for partitions}, Ann. of Math. (2) 42 (1941), 1075–1090.

\bibitem{jacobi} C. Jacobi, \textit{Fundamenta nova theoriae function ellipticarum}, Mathematische Werke 1 (1829), 49-239.


\bibitem{killford} L. Killford, \textit{Modular Forms: A classical and computational introduction}, Imperial College Press, 2008.

\bibitem{sprague} R. Sprague, \textit{\"{U}ber mathematische Kampfspiele}, Tohoku Mathematical Journal, 41
(1935), 438–444.

\bibitem{serre}  J.P Serre, \textit{Divisibilit\'e des coefficients des formes modulaires de poids entier}, C.R. Acad. Sci. Paris (A) 279 (1974), 679-682.
\end{thebibliography}
\end{document}